\documentclass[11pt,a4paper]{article}
\usepackage{amsmath,amsfonts,amssymb,latexsym,graphics,epsfig}
\usepackage{color}
\usepackage{amsthm}
\usepackage{graphicx,url}
\usepackage{hyperref}
\usepackage[english]{babel}
\usepackage{epsfig}
\usepackage{enumerate}
\usepackage{graphicx}
\usepackage{floatrow}
\usepackage{float}
\usepackage{pifont}
\usepackage{caption}
\usepackage{subfig}
\usepackage{fullpage}
\usepackage{fancyhdr}
\usepackage{authblk}
\usepackage[arrow,frame,matrix]{xy}

\newtheorem{thm}{Theorem}[section]
\newtheorem{lem}[thm]{Lemma}
\newtheorem{pro}[thm]{Proposition}
\newtheorem{de}[thm]{Definition}
\newtheorem{re}[thm]{Remark}

\newtheorem{cor}[thm]{Corollary}


\def\be{\begin{equation}}
\def\ee{\end{equation}}
\def\bea{\begin{eqnarray}}
\def\eea{\end{eqnarray}}
\numberwithin{equation}{section}

\begin{document}
\title{On the integrability of strongly regular graphs}

\author{Jack H. Koolen\thanks{ J.H. Koolen is partially supported by the National Natural Science Foundation of China (Grant No. 11471009 and Grant No. 11671376).}~, Masood Ur Rehman\thanks{ M.U. Rehman is supported by the Chinese Scholarship Council at USTC, China.}~, Qianqian Yang\thanks{ Corresponding author}}

%

\maketitle
\date{}
\begin{abstract}
Koolen et al. showed that if a connected graph with smallest eigenvalue at least $-3$ has large minimal valency, then it is $2$-integrable. In this paper, we will prove that a lower bound for the minimal valency is $166$.
\end{abstract}

\emph{\textbf{Keywords and phrases:} strongly regular graph, lattice, $s$-integrability}

\emph{\textbf{Mathematics Subject Classification:} 05C50, 05E30, 11H99}

\section{Introduction}
(For undefined notations, we refer to next section.) In 1976, Cameron, Goethals, Seidel and Shult \cite{PJC} showed that:
\begin{thm}
If $G$ is a connected graph with smallest eigenvalue at least $-2$, then either $G$ has at most $36$ vertices, or $G$ is integrable.
\end{thm}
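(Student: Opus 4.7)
My plan is to associate to $G$ a system of norm-$\sqrt{2}$ vectors in Euclidean space and then invoke the classification of simply-laced root systems. Let $A$ be the adjacency matrix. Since $\lambda_{\min}(A)\ge -2$, the matrix $M:=A+2I$ is positive semidefinite, so I can factor $M=N^{T}N$ for some real matrix $N$. Writing the columns of $N$ as $v_1,\ldots,v_n$, one obtains $\|v_i\|^{2}=2$ and $\langle v_i,v_j\rangle\in\{0,1\}$, with $\langle v_i,v_j\rangle=1$ precisely when $ij\in E(G)$. Integrability of $G$ amounts, in this language, to being able to choose $N$ with entries in $\mathbb{Z}$, i.e.\ to realising the $v_i$ as integer vectors of squared length $2$, which forces each $v_i$ to be of the form $\pm e_a\pm e_b$ in some $\mathbb{Z}^{m}$.

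Next I would close the set $\{\pm v_i\}$ under the reflections $s_v(x)=x-\langle x,v\rangle v$. A direct calculation shows that if $\langle v_i,v_j\rangle=1$ then $s_{v_i}(v_j)=v_j-v_i$ again has squared norm $2$ and has integer inner product with every existing $v_k$. Iterating yields a finite set $R\subset V:=\mathrm{span}(v_1,\ldots,v_n)$ of norm-$\sqrt{2}$ vectors closed under the reflections it generates, hence a simply-laced root system. By the classification, each indecomposable component of $R$ is of type $A_k$, $D_k$, $E_6$, $E_7$, or $E_8$, and the connectedness of $G$ forces the $v_i$ to be distributed in a controlled way among these components.

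For components of type $A_k$ or $D_k$, the standard model realises the roots as vectors $e_i-e_j$, respectively $\pm e_i\pm e_j$, inside $\mathbb{Z}^{m}$. Pulling back through the isomorphism expresses the corresponding $v_i$ as integer vectors, so that the associated block of $N$ can be chosen with integer entries, giving integrability of the part of $G$ supported on those components. In particular, if no exceptional component occurs, $G$ is integrable.

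If some component of $R$ is of exceptional type $E_6$, $E_7$, or $E_8$, then the vertices of $G$ meeting that component correspond to a subset of the $240$ roots of $E_8$ that is pairwise at angle $60^{\circ}$ or $90^{\circ}$. A combinatorial count bounds the maximum size of such a "positive" subset, and an analysis of how an exceptional component may coexist with $A$/$D$-components in a connected graph pins down the global bound $|V(G)|\le 36$. The main obstacle is precisely this last step: the root-system setup is clean, but extracting the sharp constant $36$ requires a detailed case analysis of the exceptional root systems and their extensions by orthogonal $A$/$D$-pieces, which is where the bulk of the Cameron–Goethals–Seidel–Shult argument sits.
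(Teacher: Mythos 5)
The paper does not prove this theorem; it is quoted from Cameron--Goethals--Seidel--Shult \cite{PJC}, and your root-system strategy is exactly the one used in that cited source (factor $A+2I=N^{T}N$, close the columns under reflections, invoke the $A$/$D$/$E$ classification, and read off integrability from the standard integral models of $A_k$ and $D_k$). So the approach is the right one, and the $A$/$D$ half of the dichotomy is correctly handled. Two remarks on the details: first, the finiteness of the reflection closure should be justified -- all generated vectors lie in the $\mathbb{Z}$-span of the $v_i$ and have norm $2$, and a lattice contains only finitely many vectors of bounded norm. Second, your final step is more complicated than it needs to be: if $v_i$ and $v_j$ lie in distinct irreducible components of the closed root system they are orthogonal, hence $i\not\sim j$, so connectedness of $G$ forces \emph{all} the $v_i$ into a single irreducible component. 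There is therefore no ``coexistence of an exceptional component with orthogonal $A$/$D$-pieces'' to analyse; the dichotomy is simply: that one component is of type $A$ or $D$ (then $G$ is integrable) or it embeds in $E_8$.

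The genuine gap is the constant $36$. After the reduction above, what remains is to show that a set of roots of $E_8$ with pairwise inner products in $\{0,1\}$ has at most $36$ elements, and you only assert that ``a combinatorial count'' gives this. That count is the entire quantitative content of the theorem and is not routine: it is where the star-closed line-system analysis of \cite{PJC} does its work (one exploits, e.g., that $A+2I$ has rank at most $8$ together with the structure of $E_8$, and the bound is sharp, being attained by graphs on $36$ vertices represented in $E_8$). As written, your argument proves ``either $G$ is integrable or $G$ is represented in $E_8$'', which yields \emph{some} finite bound on $|V(G)|$ (at most $120$, the number of root pairs), but not the stated bound of $36$ without the missing count.
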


Note that for a connected graph $G$, the graph $G$ is complete if and only if $\theta_{\min}(G)\geq-1$; for $-1>\theta_{\min}(G)\geq-2$, Cameron et al. showed that the lattice $\Lambda(G)$ is one of the root lattices $A_p, D_q,E_6,E_7,E_8$ with $p\geq1$ and $q\geq4$, and note that $A_p$ and $D_q$ are integrable, and $E_6,E_7,E_8$ are $2$-integrable, but not integrable, see \cite{CS2}.

Thus as a corollary of (the proof of) the result of Cameron et al. \cite{PJC}, we have
\begin{cor}
Any connected graph $G$ with smallest eigenvalue at least $-2$ is $2$-integrable. Moreover, if $G$ is not integrable, then $G$ has at most $36$ vertices.
\end{cor}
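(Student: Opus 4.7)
The plan is to unpack what Cameron et al.'s proof actually delivers rather than use only the bare dichotomy of Theorem~1.1. The second assertion of the corollary is essentially a restatement of Theorem~1.1: if $G$ is not integrable, then the dichotomy forces $|V(G)| \leq 36$. So the real content is to prove $2$-integrability of every connected graph with $\theta_{\min}(G) \geq -2$.

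First I would split on the value of $\theta_{\min}(G)$. If $\theta_{\min}(G) \geq -1$, then $G$ is complete and $\Lambda(G) \cong \mathbb{Z}^n$, so $G$ is integrable and hence $2$-integrable. For the remaining case $-1 > \theta_{\min}(G) \geq -2$, I would invoke the finer classification recalled in the paragraph preceding the corollary: $\Lambda(G)$ is one of the irreducible root lattices $A_p$ (with $p \geq 1$), $D_q$ (with $q \geq 4$), $E_6$, $E_7$, or $E_8$.

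It then remains to check $2$-integrability for each of these root lattices. For $A_p$ and $D_q$, the standard embeddings $A_p \hookrightarrow \mathbb{Z}^{p+1}$ and $D_q \hookrightarrow \mathbb{Z}^q$ exhibit them as sublattices of the integer lattice, so they are integrable and hence $2$-integrable. For the exceptional lattices $E_6, E_7, E_8$, I would cite the classical fact from \cite{CS2} that after scaling by $\sqrt{2}$ each embeds isometrically into $\mathbb{Z}^N$ for a suitable $N$; equivalently, each is $2$-integrable but not integrable. Taken together, these observations cover all possibilities and yield $2$-integrability in every case. The only non-trivial input is the assertion about $E_6, E_7, E_8$, but this is well documented, so I do not anticipate a substantive obstacle; the proof is essentially a repackaging of ingredients already present in \cite{PJC} and \cite{CS2}.
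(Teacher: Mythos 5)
Your proposal is correct and follows essentially the same route as the paper, which likewise derives the corollary by combining the dichotomy of Theorem~1.1 with the classification of $\Lambda(G)$ as $A_p$, $D_q$, $E_6$, $E_7$ or $E_8$ in the subrange $-1>\theta_{\min}(G)\geq-2$ and the $2$-integrability of the exceptional root lattices from \cite{CS2}. (One harmless slip: for the complete graph on $n\geq 2$ vertices the Gram matrix $A+I$ is the all-ones matrix of rank $1$, so $\Lambda(G)\cong\mathbb{Z}$ rather than $\mathbb{Z}^n$; it is of course still integrable.)
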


For the case of graphs with smallest eigenvalue at least $-3$, Koolen, Yang and Yang \cite{kyy3} generalized a result of Hoffman (1977) \cite{Hof77} and showed the following.

\begin{thm}\label{kyy3}
There exists a positive integer $\kappa$ such that if a connected graph $G$ has smallest eigenvalue at least $-3$ and minimal valency at least $\kappa$, then $G$ is $2$-integrable.
\end{thm}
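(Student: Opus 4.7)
The plan is to reduce the 2-integrability of $G$ to a finite local problem via the theory of Hoffman graphs. Observe first that $G$ is 2-integrable precisely when the integer matrix $2(A(G)+3I)$ admits a factorization $M^{T}M$ with $M$ an integer matrix, equivalently when $2A+6I = \sum_{i} v_{i}v_{i}^{T}$ for finitely many integer vectors $v_{i}$. A natural first attempt is to use characteristic vectors of cliques: if $G$ admits a clique cover in which every edge lies in exactly two cliques and every vertex in exactly six cliques, then $\sum_{C}\chi_{C}\chi_{C}^{T} = 2A+6I$. Such covers suffice for the "root-lattice" part of the problem; the remainder must be supplied by more subtle integer vectors coming from the exceptional lattices $E_{6}, E_{7}, E_{8}$ and their $-3$ relatives.

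The principal tool is the Hoffman-graph framework, in which each vertex of $G$ is locally represented by a fat/slim Hoffman graph $\ho$ whose reduced Gram matrix (the Schur complement at the fat vertices) encodes the local quadratic form $A+3I$. The first key ingredient is the finiteness of the family $\mathcal{F}$ of indecomposable fat Hoffman graphs of reduced smallest eigenvalue at least $-3$ with slim part of bounded size; members of $\mathcal{F}$ correspond to integral extensions of the root lattices $A_{n}, D_{n}, E_{6}, E_{7}, E_{8}$. One then verifies by a finite case check that each lattice associated with some $\ho \in \mathcal{F}$ is 2-integrable, using the classical facts that $A_{n}$ and $D_{n}$ embed in $\mathbb{Z}^{n+1}$, and that $\sqrt{2}\cdot E_{k}$ embeds in $\mathbb{Z}^{N}$ for an appropriate $N$.

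The central structural step is to generalise Hoffman's 1977 theorem (which handles $\theta_{\min}\geq -2$) to the $-3$ setting: find a threshold $\kappa$ such that every connected $G$ with $\theta_{\min}(G)\geq -3$ and minimum valency at least $\kappa$ is represented by a global Hoffman graph assembled from members of $\mathcal{F}$. The proof proceeds vertex by vertex: for each $x\in V(G)$, eigenvalue interlacing forces the neighborhood graph $G(x)$ to have smallest eigenvalue at least $-2$ after the removal of a bounded number of "bad" neighbors, so the classical Hoffman theorem applies locally once the valency is large enough. The local Hoffman structures at adjacent vertices are then reconciled along their common edges, with the large valency hypothesis used to exclude non-generic identifications and to force the maximal cliques through an edge to be essentially unique.

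The final step assembles the local 2-integrabilities into a global one: the integer vectors furnished by each local piece are concatenated into a single integer matrix $M$, and a direct verification (using the overlap multiplicities predicted by the Hoffman cover) shows $M^{T}M = 2(A+3I)$. The main obstacle lies in the structural step: extending Hoffman's 1977 theorem from $-2$ to $-3$ requires controlling a much richer local combinatorics, since neighborhoods are no longer forced to be near-disjoint unions of cliques but may themselves carry nontrivial $\theta_{\min}\geq -2$ structure. It is at precisely this step that the effective value of $\kappa$ is forced to be large, and the aim of the present paper is to refine this step enough to obtain the explicit bound $\kappa = 166$.
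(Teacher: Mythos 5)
The first thing to note is that the paper you were given does not prove Theorem \ref{kyy3} at all: it is imported verbatim from the reference \cite{kyy3} (Koolen, Yang and Yang, \emph{On graphs with smallest eigenvalue at least $-3$ and their lattices}), and the present paper only uses it as a black box. So there is no ``paper's own proof'' to match against; what can be assessed is whether your sketch would stand on its own. At the level of strategy it is a faithful reconstruction of what the cited reference actually does: the Hoffman-graph framework, the finiteness of the relevant family of fat indecomposable Hoffman graphs with smallest eigenvalue at least $-3$, the $2$-integrability check for each associated lattice, and the assembly of local representations into a global integral factorization of $2(A+3I)$ are exactly the ingredients used there.

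However, as a proof your proposal has a genuine gap: every substantive step is asserted rather than carried out. The finiteness of the family $\mathcal{F}$, the claim that each member's lattice is $2$-integrable, and above all the structural theorem (``every connected $G$ with $\theta_{\min}(G)\geq -3$ and large minimal valency is covered by members of $\mathcal{F}$'') are precisely the hard content of the theorem, and your text defers all three to ``a finite case check'' and ``reconciling local structures,'' with no argument for why the reconciliation succeeds or why large valency forces it. In particular the interlacing step you describe --- that the neighborhood of each vertex has smallest eigenvalue at least $-2$ after deleting boundedly many vertices --- is not a consequence of interlacing alone and is itself a nontrivial lemma in Hoffman's line of work. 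Finally, you misstate the role of the present paper: it does not ``obtain the explicit bound $\kappa=166$''; it proves only the \emph{lower} bound $\kappa\geq 166$, by exhibiting a graph of minimal valency $165$ with smallest eigenvalue $-3$ that is not $2$-integrable (the complement of the McLaughlin graph extended by a triangle of universal vertices). The exact value of $\kappa$ remains open.
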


In this paper, we will show that the number $\kappa$ in Theorem \ref{kyy3} is at least $166$. Our main result is as follows, which immediately follows from Theorems \ref{mainintegrable}, \ref{complementofsg}, \ref{HS}, \ref{gq39}, \ref{mc} and Proposition \ref{mcinformation} {\rm (ii)}.

\begin{thm}\label{mainthm}
\begin{enumerate}
\item If $G$ is an integrable strongly regular graph with parameters $(v,k,\lambda,\mu)$ and smallest eigenvalue $\theta_{\min}(G)\in[-3,-2)$, then either $\mu\leq9$ or $G$ is the strongly regular graph $K_{\frac{v}{3}\times 3}$.
\item The strongly regular graph $\overline{SG}$, which is the complement of the Sims-Gewirtz graph, with parameters $(56,45,36,36)$ and smallest eigenvalue $-3$ is not integrable, but
$2$-integrable.
\item The Hoffman-Singleton graph $HoSi$ with parameters $(50,7,0,1)$ and smallest eigenvalue $-3$ is not $2$-integrable.
\item The strongly regular graph $\overline{GQ}(3,9)$, which is the complement of the point graph of the generalized quadrangle of order $(3,9)$, with parameters $(112,81,60,54)$ and smallest eigenvalue $-3$ is not $2$-integrable, but $4$-integrable.
\item The strongly regular graph $\overline{McL}$, which is the complement of the McLaughlin graph, with parameters $(275,162,105,81)$ and smallest eigenvalue $-3$ is not $2$-integrable, but $4$-integrable.
\end{enumerate}
\end{thm}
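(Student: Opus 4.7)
The plan is to treat the five parts as independent lattice-theoretic statements unified by a common setup. For each graph $G$ under consideration we form the lattice $\Lambda(G)$ spanned by the standard vertex vectors in the representation with Gram matrix $A(G) - \theta_{\min}(G)\,I$, and we ask for the smallest $s \geq 1$ such that $\sqrt{s}\,\Lambda(G)$ embeds isometrically in a standard integer lattice. Showing $s$-integrability is constructive: produce the coordinates. Showing non-$s$-integrability requires an arithmetic obstruction, typically via the discriminant of a well-chosen sublattice, a parity argument on vectors modulo $2$, or the non-existence of sufficiently many short vectors with prescribed inner products.

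For part (i), I would translate integrability of an SRG with parameters $(v,k,\lambda,\mu)$ and $\theta_{\min}\in[-3,-2)$ into a combinatorial configuration: a map sending each vertex $x$ to a subset $S_x \subseteq \{1,\dots,N\}$ of fixed size $r=-\theta_{\min}$, with $|S_x\cap S_y|$ prescribed by the adjacency relation. For a fixed coordinate $i$, the set $C_i := \{x : i \in S_x\}$ is a clique; double counting incidences of coordinates against pairs of non-adjacent vertices controls $\mu$ in terms of the intersection pattern of the $C_i$. A case analysis on the possible intersection sizes of the coordinate cliques should force either $\mu \leq 9$ or the degenerate situation in which the $C_i$ form a perfect partition of $V(G)$ into cliques of size three, which is exactly $K_{v/3 \times 3}$.

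For parts (ii)--(v) the proofs are graph-specific but use the same two-sided recipe. On the positive side, I would give explicit embeddings: for $\overline{SG}$ into $\frac{1}{\sqrt{2}}\mathbb{Z}^N$ via the $E_8$-type construction, and for $\overline{GQ}(3,9)$ and $\overline{McL}$ into $\frac{1}{2}\mathbb{Z}^N$ via codes or group-invariant lattices (exploiting the large automorphism groups). On the negative side, each non-$s$-integrability claim reduces to a discriminant or parity obstruction on the Gram matrix of a well-chosen induced substructure: for $HoSi$ one shows that any half-integer realization of the Petersen-like local structure produces inconsistent parities; for $\overline{GQ}(3,9)$ and $\overline{McL}$ the obstruction modulo $2$ lives in a local subgraph (a suitable coclique or neighborhood) whose lattice already has discriminant incompatible with $\frac{1}{\sqrt{2}}\mathbb{Z}^N$.

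The main obstacle is part (i). Unlike (ii)--(v), which are finite checks on specific graphs, part (i) must rule out an infinite family of parameter sets, and the analysis of overlaps of the coordinate cliques $C_i$ naturally splits into many branches. Isolating $K_{v/3\times 3}$ as the unique exception with $\mu>9$ demands careful bookkeeping to ensure no other extremal configuration survives. For parts (ii)--(v) the technical difficulty is mainly finding the right local obstruction: once located, the non-integrability claims reduce to determinant computations, but the choice of sublattice is guided by the geometry (spreads, ovoids, or design substructures) of each individual graph.
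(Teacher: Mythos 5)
Your overall framework (the Gram matrix $A(G)-\lfloor\theta_{\min}(G)\rfloor I$, constructive embeddings for the positive claims, obstructions for the negative ones) matches the paper's, but two of your key steps do not survive contact with the details.

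First, in part (i) your claim that each coordinate class $C_i=\{x: i\in S_x\}$ is a clique is false, and it fails exactly in the case that matters. An integral matrix $N$ with $A(G)+3I=N^TN$ has columns with entries in $\{0,\pm1\}$ and support size $3$; these are not $0/1$ columns, and two columns may share a coordinate with opposite signs. Indeed, when $\mu\ge 10$ the paper shows that every pair of \emph{non-adjacent} vertices $u,v$ must satisfy $|\mathrm{supp}(N_u)\cap\mathrm{supp}(N_v)|=2$ (e.g.\ $N_u=\mathbf{e}_1+\mathbf{e}_2+\mathbf{e}_3$, $N_v=\mathbf{e}_1-\mathbf{e}_2+\mathbf{e}_4$), so the coordinate classes are emphatically not cliques, and your double count of coordinate incidences against non-adjacent pairs does not control $\mu$ as described. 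The paper's proof of (i) instead runs a sign-tracking argument: it pins down the support intersection sizes for adjacent pairs ($=1$) and non-adjacent pairs ($=2$), proves a sign-coherence statement for coordinates shared by two non-adjacent vertices, and then bounds the number of common non-neighbours of a non-adjacent pair by $1$, which forces $\overline G$ to be a disjoint union of triangles, i.e.\ $G=K_{\frac{v}{3}\times 3}$. Without the sign analysis your case distinction has no engine.

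Second, for the negative claims in (iii)--(v) the proposed ``discriminant or parity obstruction on a well-chosen sublattice'' is not a mechanism known to work here and is not what the paper does. The paper's non-$2$-integrability proofs rest on two quantitative tools absent from your plan: the linear constraint $\sum_i\mathbf{u}_i\sum_{x\in V_i}\mathbf{r}_x=0$ imposed on every row $\mathbf{r}$ of $N$ by an eigenvector $\mathbf{u}$ of an equitable quotient with eigenvalue $\theta_{\min}(G)$, and the pigeonhole bound $|\mathrm{supp}(\mathbf{r})|\le s\lceil-\theta_{\min}(G)\rceil|V(G)|/\mathrm{rank}(N)$. For $\overline{GQ}(3,9)$ these collide directly (every row support has size at least $31$, versus $672/22<31$); for $HoSi$ they force every row support to induce a Petersen graph or two disjoint pentagons, after which a counting argument produces ten Petersen rows whose pairwise intersection pattern is a $K_{5,5}$, incompatible with the Fan--Schwenk/Hafner description of pentagon decompositions of the Hoffman--Singleton graph. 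None of this is a determinant computation, and the discriminant of a sublattice does not by itself obstruct an embedding into $\frac{1}{\sqrt 2}\mathbb{Z}^N$ for large $N$. By contrast, your positive claims are fine in outline and agree with the paper: $\overline{SG}$ via the incidence matrix of the quasi-symmetric $2$-$(21,6,4)$ design, $\overline{McL}$ via the shorter Leech lattice $\Lambda_{23}$, and $\overline{GQ}(3,9)$ inherited as the second subconstituent of $\overline{McL}$; non-integrability of $\overline{SG}$ then follows from part (i).
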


\begin{re}
\begin{enumerate}
\item The result $\mu\leq9$ in Theorem \ref{mainthm} {\rm (i)} can not be improved, as the strongly regular graph with parameters $(35,18,9,9)$ and smallest eigenvalue $-3$ is integrable.
\item Adding three distinct vertices $x_1,x_2,x_3$ to the graph $\overline{McL}$ such that $x_1,x_2,x_3$ are adjacent to each other and to all vertices of $\overline{McL}$, we obtain a new graph with smallest eigenvalue $-3$ and minimal valency $166$. This shows that the value of $\kappa$ in Theorem \ref{kyy3} is at least $166$.
\end{enumerate}
\end{re}

\section{Definitions and preliminaries}
All the graphs considered in this paper are finite, undirected and simple. Let $G=(V(G), E(G))$ be a graph with the vertex set $V(G)$ and the edge set $E(G)$. Recall that the \emph{adjacency matrix} $A(G)$ of $G$ is the $01$-matrix with rows and columns indexed by the vertices of $G$, such that the $xy$-entry of $A(G)$ is $1$ if and only if $x$ and $y$ are adjacent. We write $x\sim y$ if the vertices $x$ and $y$ are adjacent and $x\not\sim y$ otherwise. The \emph{eigenvalues} of $G$ are the eigenvalues of $A(G)$ and we denote by $\theta_{\min}(G)$ the smallest eigenvalue of $G$. For a vertex $x$ in G, we write $k(x)$ for the valency of $x$, that is, the number of neighbors of $x$. We call the graph $G$ \emph{$k$-regular} if $k(x)=k$ for any vertex $x$.

Recall that a \emph{$c$-clique} (resp. \emph{$c$-coclique}) is a subgraph of $G$ with $c$ vertices such that every pair of vertices are adjacent (resp. not adjacent). The well-known Ramsey's Theorem is as follows.

\begin{thm}\label{Ramsey}
(\cite{Ramsey}) Let a, b be two positive integers. Then there exists a minimum positive integer $R(a,b)$ such that for any graph $G$ on $m\geq R(a,b)$ vertices, the graph $G$ contains a $a$-clique or a $b$-coclique as a subgraph.
\end{thm}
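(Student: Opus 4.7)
The plan is to prove Ramsey's theorem by induction on $a+b$, establishing simultaneously the existence of $R(a,b)$ and the recursive upper bound $R(a,b)\leq R(a-1,b)+R(a,b-1)$ for $a,b\geq 2$.

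First I would handle the base cases. Observe that $R(1,b)=R(a,1)=1$, since a single vertex is trivially both a $1$-clique and a $1$-coclique. It is also convenient to note $R(2,b)=b$ and $R(a,2)=a$: a graph on $b$ vertices either has an edge (yielding a $2$-clique) or has no edge (yielding a $b$-coclique), and symmetrically for $R(a,2)$. These cases anchor the induction.

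Next I would carry out the inductive step. Assume $a,b\geq 3$ and that $R(a-1,b)$ and $R(a,b-1)$ both exist. Set $N=R(a-1,b)+R(a,b-1)$, and let $G$ be any graph on $m\geq N$ vertices. Pick an arbitrary vertex $v\in V(G)$, and partition the remaining $m-1\geq N-1$ vertices into the set $\Gamma(v)$ of neighbors of $v$ and the set $\overline{\Gamma}(v)$ of non-neighbors of $v$. By pigeonhole, either $|\Gamma(v)|\geq R(a-1,b)$ or $|\overline{\Gamma}(v)|\geq R(a,b-1)$. In the first case, the induction hypothesis applied to the subgraph induced on $\Gamma(v)$ produces either an $(a-1)$-clique, which together with $v$ forms an $a$-clique of $G$, or a $b$-coclique of $G$. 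In the second case, the induction hypothesis applied to the subgraph induced on $\overline{\Gamma}(v)$ produces either an $a$-clique of $G$, or a $(b-1)$-coclique, which together with $v$ forms a $b$-coclique of $G$. Either way $G$ contains the desired substructure, so $R(a,b)$ exists and is at most $N$. Minimality of $R(a,b)$ follows from the well-ordering principle applied to the set of positive integers with the stated property.

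There is no real obstacle here; the only subtlety is organizing the double induction cleanly so that the recursive inequality $R(a,b)\leq R(a-1,b)+R(a,b-1)$ is only invoked on pairs with strictly smaller sum $a+b$, which is guaranteed by the induction hypothesis. Since this theorem is standard and used only as a tool later in the paper, I would keep the write-up brief and simply cite~\cite{Ramsey} for the historical record.
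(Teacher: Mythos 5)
Your argument is the standard Erd\H{o}s--Szekeres double induction giving $R(a,b)\leq R(a-1,b)+R(a,b-1)$, and it is correct as written (the pigeonhole step, the absorption of $v$ into the clique or coclique, and the well-ordering argument for minimality are all sound). The paper itself offers no proof of this statement---it is quoted as a classical result with a citation to Ramsey's 1930 paper---so there is nothing to compare against; your instinct to keep the write-up brief and simply cite \cite{Ramsey} matches the paper's treatment.
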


Let $\pi=\{V_1,\ldots,V_p\}$ be a partition of the vertex set $V(G)$ of the graph $G$. For each vertex $x$ in $V_i$, write $d_x^{(j)}$ for the number of neighbors of $x$ in $V_j$. Then we write $q_{i,j}=\frac{1}{|V_i|}\sum_{x\in V_i}d_x^{(j)}$ for the average number of neighbors in $V_j$ of vertices in $V_i$. The matrix $Q_{\pi}:=(q_{i,j})$ is called the \emph{quotient matrix} of $G$ relative to $\pi$ and $\pi$ is called \emph{equitable} if for all $i$ and $j$, we have $d_x^{(j)}=q_{i,j}$ for each $x\in V_i$.

\begin{lem}(\cite[Theorem $9.3.3$ and Lemma $9.6.1$]{Ch&G})\label{quotienteigenvalues1}
If $\pi$ is a partition of the vertex set of a graph $G$ which has three distinct eigenvalues, then $\pi$ is equitable if and only if all eigenvalues of the quotient matrix $Q_\pi$ are the eigenvalues of $G$.
\end{lem}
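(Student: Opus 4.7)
The plan is to handle the easy direction in a sentence, and then attack the converse by sandwiching a strongly-regular polynomial identity between the characteristic matrix $H$ of $\pi$ and its transpose, producing a rank-one positive semidefinite equation whose only solution is equitability.

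For ``$\pi$ equitable $\Rightarrow$ every eigenvalue of $Q_\pi$ is an eigenvalue of $G$'', equitability gives $AH=HQ_\pi$, so any eigenvector $v$ of $Q_\pi$ produces a nonzero eigenvector $Hv$ of $A$ with the same eigenvalue (the columns of $H$ are independent, being indicators of disjoint non-empty cells). For the converse, set $N:=AH-HQ_\pi$; the goal is to show $N=0$. Since $Q_\pi=(H^{T}H)^{-1}H^{T}AH$, one automatically has $H^{T}N=0$, and expanding $H^{T}A^{2}H$ via $AH=HQ_\pi+N$ yields the clean identity $H^{T}A^{2}H=DQ_\pi^{2}+N^{T}N$, where $D:=H^{T}H$.

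Next, since a connected regular graph with three distinct eigenvalues is strongly regular, one has $(A-\theta_1 I)(A-\theta_2 I)=cJ$ with $c=(\theta_0-\theta_1)(\theta_0-\theta_2)/n$ and $\theta_0>\theta_1>\theta_2$ the three eigenvalues (the paper applies this lemma only to strongly regular graphs, so regularity is available). Sandwiching this identity with $H^{T}$ and $H$, using the formula above and $H^{T}JH=\mathbf{d}\mathbf{d}^{T}$ with $\mathbf{d}=(|V_1|,\ldots,|V_p|)^{T}$, produces the key identity
$$D\,(Q_\pi-\theta_1 I)(Q_\pi-\theta_2 I)+N^{T}N=c\,\mathbf{d}\mathbf{d}^{T}.$$
Conjugating by $D^{-1/2}$ to symmetrize, with $\widetilde{Q}:=D^{1/2}Q_\pi D^{-1/2}$ (symmetric, same spectrum as $Q_\pi$) and $\mathbf{e}:=D^{1/2}\mathbf{1}_p$, this becomes
$$P(\widetilde{Q})+(ND^{-1/2})^{T}(ND^{-1/2})=c\,\mathbf{e}\mathbf{e}^{T},\qquad P(x):=(x-\theta_1)(x-\theta_2).$$

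The hypothesis puts the spectrum of $\widetilde{Q}$ in $\{\theta_0,\theta_1,\theta_2\}$; since $P$ vanishes at $\theta_1,\theta_2$ and $P(\theta_0)=cn>0$, the matrix $P(\widetilde{Q})$ is positive semidefinite. So the left-hand side is a sum of two PSD matrices equal to the rank-one PSD matrix $c\,\mathbf{e}\mathbf{e}^{T}$ on the right, which forces both summands to be nonnegative multiples of $\mathbf{e}\mathbf{e}^{T}$. Regularity gives $Q_\pi\mathbf{1}_p=\theta_0\mathbf{1}_p$, hence $\widetilde{Q}\mathbf{e}=\theta_0\mathbf{e}$ and $P(\widetilde{Q})\mathbf{e}=cn\,\mathbf{e}$, which pins the coefficient on $\mathbf{e}\mathbf{e}^{T}$ of $P(\widetilde{Q})$ at exactly $c$. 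The second summand is then $0$, giving $N=0$ and hence equitability. The main obstacle, in my view, is not any single calculation but spotting the right packaging of the three-eigenvalue hypothesis: the cubic minimal polynomial $\prod_i(A-\theta_i I)=0$ alone does not yield a rank-one identity, whereas the quadratic form $(A-\theta_1 I)(A-\theta_2 I)=cJ$ (available because of regularity) does, and its sandwich by $H^{T}$ and $H$ directly produces the rank-one PSD equation that collapses to $N=0$.
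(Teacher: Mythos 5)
The paper offers no proof of this lemma --- it is quoted directly from Godsil and Royle --- so there is no internal argument to compare against; judged on its own, your proof is correct for every instance in which the paper actually invokes the lemma. The forward direction is standard, and the converse checks out line by line: $H^{T}N=0$ follows from $Q_\pi=(H^{T}H)^{-1}H^{T}AH$, the identity $H^{T}A^{2}H=DQ_\pi^{2}+N^{T}N$ is right, sandwiching $(A-\theta_1I)(A-\theta_2I)=cJ$ and conjugating by $D^{-1/2}$ gives the rank-one positive semidefinite equation, the two summands are forced to be nonnegative multiples of $\mathbf{e}\mathbf{e}^{T}$, the eigenvector relation $\widetilde{Q}\mathbf{e}=\theta_0\mathbf{e}$ pins the first coefficient at $c$, and $N=0$ (i.e.\ $AH=HQ_\pi$) is indeed equivalent to equitability. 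Two remarks. First, your argument genuinely requires $G$ to be connected and regular: both the rank-one identity $(A-\theta_1I)(A-\theta_2I)=cJ$ and the relation $Q_\pi\mathbf{1}_p=\theta_0\mathbf{1}_p$ fail without these, whereas the lemma as stated assumes only three distinct eigenvalues. You flag this, and the restriction is harmless here, since every application in the paper (the subconstituent partition of a strongly regular graph, the Petersen-subgraph partition of the Hoffman--Singleton graph, the coclique partition of $\overline{GQ}(3,9)$) concerns a connected strongly regular graph. Second, your route is in substance a self-contained specialization of the one in the cited source, where the converse comes from Haemers' ``tight interlacing implies equitable''; the content you add is that for a connected regular graph the multiplicity of $\theta_0$ is $1$, so ``all eigenvalues of $Q_\pi$ are eigenvalues of $G$'' automatically makes the interlacing tight, and this is exactly what your explicit rank-one collapse encodes.
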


Now let us look at a class of highly structured graphs, that is, the class of strongly regular graphs. A graph $G$ with $v$ vertices is said to be a \emph{strongly regular graph} with parameters $(v,k,\lambda,\mu)$ if it is $k$-regular, every pair of adjacent vertices has $\lambda$ common neighbors, and every pair of distinct nonadjacent vertices has $\mu$ common neighbors.

Note that if $G$ is a strongly regular graph with parameters $(v,k,\lambda,\mu)$, then the complement $\overline{G}$ of the graph $G$ is also strongly regular and its parameters are
\begin{equation}\label{compleparameter}
(v,v-1-k,v-2-2k+\mu,v-2k+\lambda).
\end{equation}

Let $G$ be a strongly regular graph with parameters $(v, k, \lambda, \mu)$. For any vertex $x$ of $G$, we denote by $N_i(x)$ the set of vertices at
distance $i$ from $x$, where $i=1,2$. We call the graph induced by $N_1(x)$ (resp. $N_2(x)$)  the \emph{first} (resp. \emph{second}) \emph{subconstituent} of $G$ relative to $x$. Let $\pi(x):=\{\{x\},N_1(x),N_2(x)\}$ be a partition of $V(G)$. Note that $\pi(x)$ is equitable and the quotient matrix $Q_{\pi(x)}$ of $G$ relative to $\pi$ is as follows:
\begin{equation}\label{quotientmatrix}
Q_{\pi(x)}=\begin{pmatrix}
0 & k & 0 \\
1 & \lambda & k-1-\lambda\\
0 & \mu & k-\mu
\end{pmatrix}.
\end{equation}
By Lemma \ref{quotienteigenvalues1}, the eigenvalues of the quotient matrix $Q_{\pi(x)}$ are also the eigenvalues of $G$.

\subsection{Graphs and lattices}
In this subsection, we will define the $s$-integrability of graphs. Before that, we will introduce integral lattices and their $s$-integrability.

Let $\Lambda$ be a subset of $\mathbb{R}^n$. We say that $\Lambda$ is a \emph{lattice} if there exist vectors $\mathbf{u}_1,\mathbf{u}_2,\ldots,\mathbf{u}_m\in\mathbb{R}^n$ (for some $m$) such that $\Lambda=\big\{\sum a_i\mathbf{u}_i\mid a_i\in\mathbb{Z}\big\}$ and we call $\{\mathbf{u}_1,\ldots,\mathbf{u}_m\}$ a \emph{generator set} of the lattice $\Lambda$.

A lattice $\Lambda$ is called \emph{integral}, if $(\mathbf{v}_i,\mathbf{v}_j)\in\mathbb{Z}$ for all $\mathbf{v}_i,\mathbf{v}_j\in\Lambda$. An integral lattice $\Lambda$ is called \emph{$s$-integrable} (for some positive integer $s$) if $\sqrt{s}\Lambda$ can be embedded in the standard lattice. Note that a $1$-integrable lattice is also called \emph{integrable}.

For a positive real number $t$, a \emph{representation of norm $t$} of a graph $G$ is a map
$\varphi : V(G) \mapsto \mathbb{R}^n$ (for some positive integer $n$) such that

\[
(\varphi(x),\varphi(y) )=
\left\{
  \begin{array}{ll}
    $t$ &\text{ if $x=y$}, \\
    1 &\text{ if $x \sim y$}, \\
    0 &\text{ otherwise}. \\
  \end{array}
\right.
\]
Note that $G$ has a representation of norm $t$ if and only if $\theta_{\min}(G)\geq -t$, as $A(G)+tI$ has to be positive semidefinite.

Let $\varphi$ be a representation of norm $-\lfloor\theta_{\min}(G)\rfloor$ of $G$. The integral lattice generated by $\varphi$ is the lattice $\Lambda^{\varphi}(G):=\big\{\sum a_x\varphi(x)\mid a_x\in\mathbb{Z}, x\in V(G)\big\}$. As the Gram matrix of its generator set $\{\varphi(x)\mid x\in V(G)\}$ is equal to $A(G)-\lfloor\theta_{\min}(G)\rfloor I$, (the isomorphic class of) the lattice $\Lambda^{\varphi}(G)$ only depends on the matrix $A(G)-\lfloor\theta_{\min}(G)\rfloor I$, not on the particular representation $\varphi$. Thus, we denote this lattice by $\Lambda(G)$ and we say the graph $G$ is \emph{$s$-integrable} if the lattice $\Lambda(G)$ is $s$-integrable. If the lattice $\Lambda(G)$ is $1$-integrable, we also say that the graph $G$ is \emph{integrable}.

A graph $G$ is $s$-integrable if and only if there exists an integral matrix $N$ such that the equality $A(G)-\lfloor\theta_{\min}(G)\rfloor I=\frac{1}{s}N^TN$ holds.

Note that if $G$ is $s$-integrable, it is also $\tau s$-integrable for all positive integers $\tau$. The smallest positive integer $s$ such that a graph is $s$-integrable gives a measure of the complexity of the graphs structure.

\subsection{Designs}
\begin{de}
A \emph{design} is a pair $(S,\mathcal{B})$, where $S$ is a set of elements called \emph{points} and $\mathcal{B}$ is a collection of nonempty subsets of $S$ called \emph{blocks}. Let $\tilde{v},\tilde{k},\tilde{\lambda}$ and $\tilde{t}$ be positive integers such that $\tilde{v}>\tilde{k}\geq \tilde{t}$. A \emph{$\tilde{t}$-$(\tilde{v},\tilde{k},\tilde{\lambda})$} design is a design $(S,\mathcal{B})$ such that the following properties are satisfied:
\begin{enumerate}
\item $|S|=\tilde{v}$,
\item each block contains exactly $\tilde{k}$ points, and
\item every set of $\tilde{t}$ distinct points is contained in exactly $\tilde{\lambda}$ blocks.
\end{enumerate}
\end{de}
\begin{lem}(\cite[Theorem 9.4]{Stin})\label{designlem}
Suppose $(S,\mathcal{B})$ is a $\tilde{t}$-$(\tilde{v},\tilde{k},\tilde{\lambda})$ design and $1\leq \tilde{s}\leq\tilde{t}$. Then $(S,\mathcal{B})$ is an $\tilde{s}$-$(\tilde{v},\tilde{k},\tilde{\lambda}_{\tilde{s}})$ design, where
\begin{equation}\label{lambdas}
\tilde{\lambda}_{\tilde{s}}=\frac{\tilde{\lambda}\binom{\tilde{v}-\tilde{s}}{\tilde{t}-\tilde{s}}}{\binom{\tilde{k}-\tilde{s}}{\tilde{t}-\tilde{s}}}.
\end{equation}
\end{lem}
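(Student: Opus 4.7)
The plan is to prove Lemma \ref{designlem} by a straightforward double counting argument applied to an arbitrarily chosen $\tilde{s}$-subset of $S$. Fix any $\tilde{s}$-subset $T \subseteq S$, and let $\tilde{\lambda}_{\tilde{s}}(T)$ denote the number of blocks in $\mathcal{B}$ that contain $T$. The goal is to show that $\tilde{\lambda}_{\tilde{s}}(T)$ does not depend on the choice of $T$ and equals the right-hand side of \eqref{lambdas}, which is precisely what the definition of an $\tilde{s}$-$(\tilde{v},\tilde{k},\tilde{\lambda}_{\tilde{s}})$ design requires (the conditions $|S|=\tilde{v}$ and $|B|=\tilde{k}$ for every $B \in \mathcal{B}$ are inherited directly from the $\tilde{t}$-design hypothesis).

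To compute $\tilde{\lambda}_{\tilde{s}}(T)$, I would count in two ways the cardinality of the set
\[
\mathcal{P}(T) := \{(X,B) : T \subseteq X \subseteq B,\ |X| = \tilde{t},\ B \in \mathcal{B}\}.
\]
On one hand, first choose $X$: any $\tilde{t}$-subset of $S$ containing $T$ is obtained by adjoining $\tilde{t}-\tilde{s}$ further points from the $\tilde{v}-\tilde{s}$ points of $S \setminus T$, giving $\binom{\tilde{v}-\tilde{s}}{\tilde{t}-\tilde{s}}$ choices; then by the hypothesis that $(S,\mathcal{B})$ is a $\tilde{t}$-$(\tilde{v},\tilde{k},\tilde{\lambda})$ design, exactly $\tilde{\lambda}$ blocks contain any such $X$. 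This yields $|\mathcal{P}(T)| = \tilde{\lambda}\binom{\tilde{v}-\tilde{s}}{\tilde{t}-\tilde{s}}$.

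On the other hand, first choose the block $B \supseteq T$: there are $\tilde{\lambda}_{\tilde{s}}(T)$ such blocks, and for each one, the number of $\tilde{t}$-subsets $X$ with $T \subseteq X \subseteq B$ is the number of ways to extend $T$ to a $\tilde{t}$-subset inside $B$, i.e.\ $\binom{\tilde{k}-\tilde{s}}{\tilde{t}-\tilde{s}}$. Equating the two counts and dividing by $\binom{\tilde{k}-\tilde{s}}{\tilde{t}-\tilde{s}}$ (which is positive since $\tilde{s} \leq \tilde{t} \leq \tilde{k}$) gives $\tilde{\lambda}_{\tilde{s}}(T) = \tilde{\lambda}\binom{\tilde{v}-\tilde{s}}{\tilde{t}-\tilde{s}} / \binom{\tilde{k}-\tilde{s}}{\tilde{t}-\tilde{s}}$, which is independent of $T$, as required.

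There is no real obstacle here: the argument is a textbook double count, and the only minor point worth flagging is that the resulting expression is automatically an integer (it counts blocks), which one could note parenthetically but does not need to verify combinatorially. The lemma is stated only so that it can be invoked later in the paper when analysing the block structures associated with the strongly regular graphs in Theorem \ref{mainthm}.
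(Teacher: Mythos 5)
Your double-counting argument is correct and is exactly the standard proof of this classical fact; the paper itself gives no proof but simply cites Stinson's book, where the same count of pairs $(X,B)$ with $T\subseteq X\subseteq B$ is used. Nothing is missing.
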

If $\tilde{\lambda}=1$, then the $\tilde{t}$-$(\tilde{v},\tilde{k},1)$ design is called a \emph{Steiner system} and denote by $S(\tilde{t},\tilde{k},\tilde{v})$. Note that a Steiner system
$S(\tilde{t},\tilde{k},\tilde{v})$ with $\tilde{t}\geq2$ is also a $2$-$(\tilde{v},\tilde{k},\tilde{\lambda}_{2})$ design, where $\tilde{\lambda}_2=\frac{\binom{\tilde{v}-\tilde{s}}{\tilde{t}-\tilde{s}}}{\binom{\tilde{k}-\tilde{s}}{\tilde{t}-\tilde{s}}}.$

\begin{de}
A $2$-$(\tilde{v},\tilde{k},\tilde{\lambda})$ design is called \emph{quasi-symmetric} if the cardinality of the intersection of two blocks takes only two values.
\end{de}

\begin{pro}(\cite[Theorem 3.7]{s24intersection})\label{s23}
The Steiner system $S(4,7,23)$ is quasi-symmetric with intersection numbers $1$ and $3$.
\end{pro}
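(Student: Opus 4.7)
The plan is to fix a block $B_1 \in \mathcal{B}$ and pin down the distribution of the intersection sizes $|B_1 \cap B_2|$ as $B_2$ ranges over the remaining $252$ blocks by a standard $t$-design counting argument. The first observation is that two distinct blocks cannot share $4$ or more points, because any $4$ points determine a unique block in $S(4,7,23)$; so, writing $n_i := |\{B_2 \neq B_1 : |B_1 \cap B_2| = i\}|$, the quasi-symmetry claim reduces to showing $n_0 = n_2 = 0$.

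First I would apply Lemma~\ref{designlem} with $(\tilde{t},\tilde{k},\tilde{v},\tilde{\lambda})=(4,7,23,1)$ to read off the replication numbers at lower levels. Formula~(\ref{lambdas}) gives $\tilde{\lambda}_3=\binom{20}{1}/\binom{4}{1}=5$, $\tilde{\lambda}_2=\binom{21}{2}/\binom{5}{2}=21$, and $\tilde{\lambda}_1=\binom{22}{3}/\binom{6}{3}=77$, so the total number of blocks is $77\cdot 23 / 7 = 253$ and in particular $\sum_{i=0}^{3} n_i = 252$.

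Then I would write down three further linear relations by double counting incidences of $j$-subsets of $B_1$ with the other blocks, for $j=1,2,3$. Each point of $B_1$ lies in $\tilde{\lambda}_1 - 1 = 76$ other blocks, giving $\sum_i i\, n_i = 7 \cdot 76 = 532$; each pair in $B_1$ lies in $\tilde{\lambda}_2 - 1 = 20$ other blocks, giving $\sum_i \binom{i}{2} n_i = 21 \cdot 20 = 420$; and each triple in $B_1$ lies in $\tilde{\lambda}_3 - 1 = 4$ other blocks, giving $\sum_i \binom{i}{3} n_i = 35 \cdot 4 = 140$. Solving the resulting triangular system from the top downwards gives $n_3 = 140$, then $n_2 = 0$, then $n_1 = 112$, and finally $n_0 = 0$, which is the desired conclusion.

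There is no real conceptual obstacle: this is the classical intersection-number calculation that works whenever the design is simultaneously a $j$-design for sufficiently many $j$. The only thing to be careful with is the arithmetic of the $\tilde{\lambda}_j$'s and of the $4 \times 4$ linear system; the qualitative content is simply that the first three moments of the intersection distribution, together with its total mass, are enough to force $n_0$ and $n_2$ to vanish.
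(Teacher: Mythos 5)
Your argument is correct and complete: the four counting identities (total mass plus the first three binomial moments of the intersection distribution, obtained from $\tilde{\lambda}_1=77$, $\tilde{\lambda}_2=21$, $\tilde{\lambda}_3=5$ via Lemma~\ref{designlem}) do form a triangular system forcing $n_3=140$, $n_2=0$, $n_1=112$, $n_0=0$, and the preliminary observation that two distinct blocks of $S(4,7,23)$ meet in at most $3$ points is exactly what caps the intersection sizes. The paper itself gives no proof of this proposition — it is quoted from Pawale's paper — so there is nothing to compare against; your computation is the standard intersection-number argument for Steiner systems and serves as a valid self-contained verification, including the check that both values $1$ and $3$ actually occur, as the definition of quasi-symmetric requires.
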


Let $(S,\mathcal{B})$ be a quasi-symmetric design with intersection numbers $l_{1}$ and $l_{2}$, where $l_{1}<l_{2}$. The \emph{block graph} of $(S,\mathcal{B})$ is the graph whose vertices are the blocks of $(S,\mathcal{B})$, with two vertices adjacent if and only if their intersection has cardinality $l_2$.
\begin{lem}(cf. \cite[Theorem 3.2]{camerondesign})\label{quasisymmetric}
The block graph of a quasi-symmetric design is strongly regular.
\end{lem}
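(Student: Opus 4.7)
The plan is to express the adjacency matrix $A$ of the block graph as an affine combination of the identity matrix $I$, the all-ones matrix $J$, and $N^{T}N$, where $N$ denotes the $\tilde{v}\times b$ point--block incidence matrix of $(S,\mathcal{B})$ (with $b=|\mathcal{B}|$). The first step is to observe that the $(B,B')$-entry of $N^{T}N$ equals $|B\cap B'|$, which by the quasi-symmetric hypothesis takes only three values: $\tilde{k}$ when $B=B'$, $l_{2}$ when $B$ and $B'$ are adjacent in the block graph, and $l_{1}$ otherwise. Consequently
\[
N^{T}N \;=\; (\tilde{k}-l_{1})\,I \;+\; l_{1}\,J \;+\; (l_{2}-l_{1})\,A,
\]
and since $l_{2}-l_{1}>0$ one can solve for $A$ as a linear combination of $I$, $J$ and $N^{T}N$.

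The second step is to exploit the classical 2-design identity $NN^{T}=(r-\tilde{\lambda})I+\tilde{\lambda}J$, where $r=\tilde{\lambda}(\tilde{v}-1)/(\tilde{k}-1)$ is the replication number. This identity shows that $NN^{T}$ has spectrum $\{r\tilde{k},\,(r-\tilde{\lambda})^{(\tilde{v}-1)}\}$, with the eigenvalue $r\tilde{k}$ on the all-ones vector $\mathbf{1}$. Since $NN^{T}$ and $N^{T}N$ share the same nonzero eigenvalues (with equal multiplicities), the spectrum of $N^{T}N$ is contained in $\{r\tilde{k},\,r-\tilde{\lambda},\,0\}$, and the eigenvalue $r\tilde{k}$ again occurs only on $\mathbf{1}$.

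The final step combines these two observations. Because $J$ acts as $bI$ on the span of $\mathbf{1}$ and as $0$ on $\mathbf{1}^{\perp}$, the linear relation above shows that $A\mathbf{1}$ is a scalar multiple of $\mathbf{1}$ (so the block graph is regular), while on $\mathbf{1}^{\perp}$ the operator $A$ is an affine function of $N^{T}N$ with at most two distinct eigenvalues, coming from the values $r-\tilde{\lambda}$ and $0$. Therefore $A$ has at most three distinct eigenvalues in total. A regular graph with exactly three distinct eigenvalues is strongly regular (equivalently, $A^{2}$ lies in the span of $I$, $J$ and $A$), and the degenerate cases where two eigenvalues coincide yield only trivially strongly regular graphs (complete or disjoint unions of cliques).

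The main obstacle is really bookkeeping rather than depth: one must verify that the linear relation genuinely expresses $A$ (which uses $l_{1}\neq l_{2}$), and that the multiplicities add up correctly so that the conclusion reaches the standard equivalent definition of strong regularity via three eigenvalues. No structural combinatorics of the design is needed beyond the identities $N^{T}N\mathbf{1}=r\tilde{k}\mathbf{1}$ and $NN^{T}=(r-\tilde{\lambda})I+\tilde{\lambda}J$, both of which are immediate consequences of the $2$-design axioms.
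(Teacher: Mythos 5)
Your proof is correct: the identity $N^{T}N=(\tilde{k}-l_{1})I+l_{1}J+(l_{2}-l_{1})A$, combined with $NN^{T}=(r-\tilde{\lambda})I+\tilde{\lambda}J$ and the fact that $N^{T}N$ and $NN^{T}$ have the same nonzero eigenvalues with the same multiplicities, shows that $A$ is a regular adjacency matrix with at most three distinct eigenvalues, hence strongly regular (with the degenerate two-eigenvalue cases still satisfying the paper's definition, which allows $\mu=0$). The paper gives no proof of this lemma---it is quoted from Cameron--van Lint---and your argument is exactly the standard spectral proof from that reference, so the two approaches coincide.
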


Let $(S,\mathcal{B})$ be a design where $S=\{x_1,\ldots,x_{\tilde{v}}\}$ and $\mathcal{B}=\{B_1,\ldots,B_{\tilde{b}}\}$. The \emph{incidence matrix} of $(S,\mathcal{B})$ is the $\tilde{v}\times \tilde{b}$ $01$-matrix $\mathcal{I}=(\mathcal{I}_{x_i,B_j})$ defined by the rule $\mathcal{I}_{x_i,B_j}$ equals $1$ if $x_i\in B_j$ and $0$ otherwise.

\section{Basic results on $s$-integrable graphs}
In this section, we will give several results about the $s$-integrability of graphs. Some of the results play an important role in the proof of our main theorem. Before stating our results, the following notations are necessary. For a given vector $\mathbf{r}$, we denote by ${\rm supp}(\mathbf{r})$ the support of $\mathbf{r}$, that is, ${\rm supp}(\mathbf{r})=\{i \mid \mathbf{r}_i\neq0\}$. If a vector $\mathbf{r}$ is indexed by the vertex set of a graph $G$, we denote by $G(\mathbf{r})$ the subgraph of $G$ induced by the set ${\rm supp}(\mathbf{r})$. For convenience, we also write
$$\gamma_x^{\mathbf{r}}:=\mathbf{r}_x,~\delta_x^{\mathbf{r}}:=\sum_{y\sim x}\mathbf{r}_y,~\zeta_x^{\mathbf{r}}:=\sum_{y\neq x,y\not\sim x}\mathbf{r}_y,$$ and $$\sigma^{\mathbf{r}}:=\gamma_x^{\mathbf{r}}+\delta_x^{\mathbf{r}}+\delta_x^{\mathbf{r}}.$$

Now we start our work.
\begin{pro}\label{basicpro}
Assume that $G$ is an $s$-integrable graph with smallest eigenvalue $\theta_{\min}(G)$ and $N$ is a $\rho\times|V(G)|$ integral matrix satisfying $A(G)+\lceil-\theta_{\min}(G)\rceil I=\frac{1}{s}N^TN$. Then:
\begin{enumerate}
\item There exists a row vector $\mathbf{r}$ of $N$ such that
\begin{equation}\label{supportsize}
|{\rm supp}(\mathbf{r})|\leq\frac{s\lceil-\theta_{\min}(G)\rceil |V(G)|}{\rho}\leq\frac{s\lceil-\theta_{\min}(G)\rceil |V(G)|}{{\rm rank}(N)}
\end{equation}holds, where ${\rm rank}(N)$ is the rank of $N$.
\item Let $\pi:=\{V_1,\ldots,V_p\}$ be an equitable partition of the vertex set of $G$ such that the quotient matrix $Q_\pi$ has $\theta_{\min}(G)$ as its smallest eigenvalue. If $\theta_{\min}(G)$ is an integer, then for any row $\mathbf{r}$ of $N$ and any eigenvector $\mathbf{u}$ of $Q_\pi$ with eigenvalue $\theta_{\min}(G)$, we have
    \begin{equation}\label{stronglysupport}
    \sum_{i=1}^p\mathbf{u}_i\sum_{x\in V_i}\mathbf{r}_x=0, \text{ where }\mathbf{u}_i   \text{ is the }V_i\text{-entry of } \mathbf{u}.
   \end{equation}
\item For any row $\mathbf{r}$ of $N$, we have the following inequality:
     \begin{equation}\label{normofr}
    \sum_{x\in V(G)}\gamma_x^{\mathbf{r}}\delta_x^{\mathbf{r}}\geq\frac{1}{s}(\mathbf{r}\mathbf{r}^T)^2+\lfloor\theta_{\min}(G)\rfloor\mathbf{r}\mathbf{r}^T.
   \end{equation}
\item If $G'$ is an induced subgraph of $G$ with $\lfloor\theta_{\min}(G')\rfloor=\lfloor\theta_{\min}(G)\rfloor$, then $G'$ is also $s$-integrable.
\end{enumerate}
\end{pro}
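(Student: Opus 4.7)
The plan is to derive all four parts directly from the defining identity $A(G)+\lceil-\theta_{\min}(G)\rceil I=\tfrac{1}{s}N^TN$ together with the integrality of $N$. The key preliminary observation, coming from the fact that $A(G)$ has zero diagonal, is that each column of $N$ has squared norm exactly $s\lceil-\theta_{\min}(G)\rceil$, so that $\sum_{i,x}N_{i,x}^2=s\lceil-\theta_{\min}(G)\rceil|V(G)|$.

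For (i), integrality forces every nonzero entry of $N$ to contribute at least $1$ to the sum above, so $N$ has at most $s\lceil-\theta_{\min}(G)\rceil|V(G)|$ nonzero entries in total; averaging over the $\rho$ rows produces a row $\mathbf{r}$ with the claimed support bound, and $\rho\geq\rank(N)$ gives the second inequality in (\ref{supportsize}). For (iv), I would restrict $N$ to the columns indexed by $V(G')$ to obtain an integral matrix $N'$ satisfying $A(G')+\lceil-\theta_{\min}(G)\rceil I=\tfrac{1}{s}(N')^TN'$; the hypothesis $\lfloor\theta_{\min}(G')\rfloor=\lfloor\theta_{\min}(G)\rfloor$ translates to $\lceil-\theta_{\min}(G')\rceil=\lceil-\theta_{\min}(G)\rceil$, so $N'$ is an $s$-integrable realization of $G'$.

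For (ii), the standard equitable-partition lift produces a vector $\tilde{\mathbf{u}}$ on $V(G)$, constant on each $V_i$ with value $\mathbf{u}_i$, which is an eigenvector of $A(G)$ with eigenvalue $\theta_{\min}(G)$. Because $\theta_{\min}(G)\in\zz$, this gives $(A(G)+\lceil-\theta_{\min}(G)\rceil I)\tilde{\mathbf{u}}=0$, whence $\|N\tilde{\mathbf{u}}\|^2=\tilde{\mathbf{u}}^TN^TN\tilde{\mathbf{u}}=0$ and so $N\tilde{\mathbf{u}}=0$; reading off the $\mathbf{r}$-coordinate of this equation and grouping the sum by the blocks of $\pi$ is exactly (\ref{stronglysupport}). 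For (iii), I would rewrite the left side of (\ref{normofr}) as $\mathbf{r}A(G)\mathbf{r}^T$, substitute $A(G)=\tfrac{1}{s}N^TN+\lfloor\theta_{\min}(G)\rfloor I$ using $-\lceil-\theta_{\min}\rceil=\lfloor\theta_{\min}\rfloor$, and then lower-bound $\|N\mathbf{r}^T\|^2$ by the square of its coordinate indexed by the row $\mathbf{r}$ itself, which equals $(\mathbf{r}\mathbf{r}^T)^2$.

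Each part is essentially a one-line computation once the identity has been unpacked, so no serious obstacle arises; the only care needed is the consistent bookkeeping of $-\lceil-\theta_{\min}\rceil=\lfloor\theta_{\min}\rfloor$ throughout, and in (ii) the role of integrality of $\theta_{\min}(G)$, which is precisely what drives the lifted eigenvector into $\ker N$ rather than merely into an eigenspace of $N^TN$.
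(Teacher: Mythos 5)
Your proposal is correct and follows essentially the same route as the paper: counting nonzero entries via the column norms $(N_x,N_x)=s\lceil-\theta_{\min}(G)\rceil$ for (i), lifting the quotient eigenvector to $\ker N$ for (ii), bounding $\|N\mathbf{r}^T\|^2$ below by the square of its own-row coordinate for (iii), and restricting $N$ to the columns of $V(G')$ for (iv). No gaps.
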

\begin{proof}
\begin{enumerate}
\item Suppose $N$ has $b$ non-zero entries. For any column $N_x$ of $N$, it has at most $s\lceil-\theta_{\min}(G)\rceil$ non-zero entries since $N$ is integral and $(N_x,N_x)=s\lceil-\theta_{\min}(G)\rceil$. Thus, $b\leq s\lceil-\theta_{\min}(G)\rceil|V(G)|$. Choose $\mathbf{r}$ to be a row of $N$ whose support size is the smallest. It is easy to see that $b\geq|\rm{supp}(\mathbf{r})|\rho\geq|\rm{supp}(\mathbf{r})|{\rm rank}(N)$ and hence ({\rm i}) holds.
\item Define a vector $\widetilde{\mathbf{u}}$ such that $\widetilde{\mathbf{u}}_x=\mathbf{u}_i$ if $x\in V_i$. It is easy to check that $A(G)\widetilde{\mathbf{u}}=\theta_{\min}(G)\widetilde{\mathbf{u}}$ holds and thus $N\widetilde{\mathbf{u}}=\mathbf{0}$. Clearly, for the row $\mathbf{r}$, $\mathbf{r}\widetilde{\mathbf{u}}=\sum_{x\in V(G)}\mathbf{r}_x\widetilde{\mathbf{u}}_x=\sum_{i=1}^p\mathbf{u}_i\sum_{x\in V_i}\mathbf{r}_x=0$ and \rm(ii) holds.
\item Since $\frac{1}{s}(\mathbf{r}\mathbf{r}^T)^2\leq\frac{1}{s}\mathbf{r}N^TN\mathbf{r}^T=\mathbf{r}(A(G)+\lceil-\theta_{\min}(G)\rceil I)\mathbf{r}^T=\mathbf{r}A(G)\mathbf{r}^T-\lfloor\theta_{\min}(G)\rfloor \mathbf{r}\mathbf{r}^T$, we have $\frac{1}{s}(\mathbf{r}\mathbf{r}^T)^2+\lfloor\theta_{\min}(G)\rfloor\mathbf{r}\mathbf{r}^T\leq\mathbf{r}A(G)\mathbf{r}^T=\sum_{x\in V(G)}\mathbf{r}_x\sum_{y\sim x}\mathbf{r}_y=\sum_{x\in V(G)}\gamma_x^{\mathbf{r}}\delta_x^{\mathbf{r}}$ and \rm{(iii)} holds.
\item This is clear.
\end{enumerate}
\end{proof}

\begin{pro}\label{coclipro}
Let $G$ be a graph with smallest eigenvalue $\theta_{\min}(G)\in[-3,-2)$. If $G$ is $2$-integrable, then for any integral matrix $N$ satisfying $A(G)+3I=\frac{1}{2}N^TN$ and any row $\mathbf{r}$ of $N$, the order of a coclique in the subgraph $G(\mathbf{r})$ is at most $6$.

Moreover, if $|\mathbf{r}_x|=2$ for some $x\in V(G)$, then the order of a coclique in the subgraph $G(\mathbf{r})$ is at most $3$.
\end{pro}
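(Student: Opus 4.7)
The plan is to apply Bessel's inequality to the columns of $N$ indexed by the coclique. First, I translate the hypothesis $A(G)+3I=\frac{1}{2}N^{T}N$ into inner-product data on the columns $\{N_{x}\}_{x\in V(G)}\subseteq\mathbb{Z}^{\rho}$ of $N$ (where $\rho$ denotes the number of rows of $N$): explicitly, $(N_{x},N_{x})=6$, $(N_{x},N_{y})=2$ if $x\sim y$, and $(N_{x},N_{y})=0$ if $x\neq y$ with $x\not\sim y$. Consequently, for any coclique $C$ of $G$ the vectors $\{N_{x}\}_{x\in C}$ are pairwise orthogonal of common squared norm $6$; equivalently, $\{N_{x}/\sqrt{6}\}_{x\in C}$ is an orthonormal system in $\mathbb{R}^{\rho}$.

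Next I fix the row $\mathbf{r}$ to be the $i$-th row of $N$, so that $\mathbf{r}_{x}=(e_{i},N_{x})$ for the standard basis vector $e_{i}\in\mathbb{R}^{\rho}$. Bessel's inequality applied to $e_{i}$ against the orthonormal system $\{N_{x}/\sqrt{6}\}_{x\in C}$ yields
$$\sum_{x\in C}\frac{(e_{i},N_{x})^{2}}{6}=\frac{1}{6}\sum_{x\in C}\mathbf{r}_{x}^{2}\leq\|e_{i}\|^{2}=1,$$
that is, $\sum_{x\in C}\mathbf{r}_{x}^{2}\leq 6$. From here both bounds follow by integrality. If $C$ is a coclique in $G(\mathbf{r})$ then every $x\in C$ lies in ${\rm supp}(\mathbf{r})$, so $\mathbf{r}_{x}$ is a nonzero integer and $\mathbf{r}_{x}^{2}\geq 1$; summing gives $|C|\leq\sum_{x\in C}\mathbf{r}_{x}^{2}\leq 6$, proving the first claim. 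For the moreover statement, read as saying that some $x_{0}\in C$ satisfies $|\mathbf{r}_{x_{0}}|=2$ (note that a vertex with $\mathbf{r}_{x_{0}}\neq 0$ automatically lies in $V(G(\mathbf{r}))$, and the Bessel estimate only constrains the $\mathbf{r}_{y}$ with $y\in C$), the single contribution $\mathbf{r}_{x_{0}}^{2}=4$ leaves $\sum_{y\in C\setminus\{x_{0}\}}\mathbf{r}_{y}^{2}\leq 2$, which forces $|C\setminus\{x_{0}\}|\leq 2$ and hence $|C|\leq 3$.

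There is no substantive obstacle here: the whole argument reduces to extracting the orthogonality of $\{N_{x}\}_{x\in C}$ from the defining equation, a single application of Bessel's inequality, and the elementary observation that a nonzero integer has square at least $1$. The only matter of taste is the precise reading of the moreover hypothesis, but under the natural interpretation the bound $|C|\leq 3$ is immediate from the same Bessel estimate.
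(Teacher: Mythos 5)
Your proposal is correct and is essentially the paper's own argument in different clothing: the paper forms the Gram matrix of the sign-normalized columns $\{\sgn(\mathbf{r}_x)N_x\}_{x\in C}$ with the row $\mathbf{r}$ deleted and invokes positive semidefiniteness of $6I-\mathbf{v}\mathbf{v}^T$ (with $\mathbf{v}=(|\mathbf{r}_x|)_{x\in C}$), which is precisely your Bessel inequality $\sum_{x\in C}\mathbf{r}_x^2\le 6$. Your reading of the ``moreover'' clause---that the vertex with $|\mathbf{r}_x|=2$ lies in the coclique---is also exactly what the paper's proof establishes, since its block Gram matrix takes that vertex as one of the columns.
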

\begin{proof}Assume $\mathbf{r}$ is the $r^{\rm{th}}$ row of $N$ and $\overline{C}$ is a coclique in $G(\mathbf{r})$ with $V(\overline{C})=\{x_1,\ldots,x_{\bar{c}}\}$, where $\bar{c}$ is the order of $\overline{C}$. Denote by $\widetilde{N}$ the matrix whose columns are indexed by the vertices of $\overline{C}$ such that its $x_i$-column equals $\frac{\mathbf{r}_{x_i}}{|\mathbf{r}_{x_i}|}N_{x_i}$, where $N_{x_i}$ is the $x_i$-column of $N$. By deleting the $r^{{\rm th}}$ row from $\widetilde{N}$, we obtain its submatrix $\widetilde{N}^\prime$. Note that $N^TN=2A+6I$ and there exists at most one vertex $x$ in $\overline{C}$ such that $|\mathbf{r}_{x}|=2$. If $|\mathbf{r}_x|=2$ for some $x$, then $(\widetilde{N}^\prime)^T\widetilde{N}^\prime=\left(\begin{array}{cc}2 & -2\mathbf{j}_{\bar{c}-1}^T \\-2\mathbf{j}_{\bar{c}-1} & 6I_{\bar{c}-1}-J_{\bar{c}-1}\end{array}\right)$;
otherwise $(\widetilde{N}^\prime)^T\widetilde{N}^\prime=6I_{\bar{c}}-J_{\bar{c}}$, where $\mathbf{j}$ and $J$ are the all-ones vector and all-ones matrix respectively. Since the matrix $(\widetilde{N}^\prime)^T\widetilde{N}^\prime$ is positive semidefinite, for the former case we have $\bar{c}\leq3$, since the matrix $\left(\begin{array}{cc}2 & -2\mathbf{j}_{3}^T \\-2\mathbf{j}_{3} & 6I_{3}-J_{3}\end{array}\right)$ is not positive semidefinite; for the latter case we have $\bar{c}\leq6$, since the matrix $6I_{7}-J_{7}$ is not positive semidefinite. This completes the proof.
\end{proof}

\section{Integrable strongly regular graphs}

\subsection{Two classes of integrable strongly regular graphs}
In this subsection, we will introduce geometric strongly regular graphs, and prove that they are integrable. Moreover, we will show that the regular complete multipartite graphs are also integrable at the end of this subsection.

Let $G$ be a strongly regular graph with parameters $(v,k,\lambda,\mu)$ and smallest eigenvalue $\theta_{\min}(G)$. In 1973, Delsarte \cite{Delsarte} showed that any clique $C$ of $G$ has the order at most $1-\frac{k}{\theta_{\min}(G)}$ and we call $C$ \emph{Delsarte} if its order is exactly $1-\frac{k}{\theta_{\min}(G)}$. We say $G$ is \emph{geometric} if there exists a set $\mathcal{C}$ of Delsarte cliques of $G$ such that every pair of adjacent vertices of $G$ lies in a unique $C\in\mathcal{C}$. In 1979, Neumaier \cite{Neumaier} showed that

\begin{thm}
For any integer $t\geq2$, there are only finitely many connected non-geometric strongly regular graphs with smallest eigenvalue at least $-t$.
\end{thm}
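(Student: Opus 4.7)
The plan is to prove Neumaier's theorem via what is nowadays called Neumaier's claw bound (or $\mu$-bound), which identifies the size of the parameter $\mu$ as the only obstruction to a connected strongly regular graph with bounded smallest eigenvalue being geometric. The argument splits into two regimes: if $\mu$ is large compared to $t$, then the Delsarte cliques fit together into a partition of the edge set and $G$ is geometric; if $\mu$ is bounded by a function of $t$, then together with $\theta_{\min}(G) \geq -t$ all the remaining parameters $(v, k, \lambda)$ are forced into a finite range, leaving only finitely many candidate graphs.

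First I would set up the structure of Delsarte cliques. Starting from any clique $C$ of size $c$ and applying eigenvalue interlacing to the quotient matrix of the partition $\{C, V(G) \setminus C\}$, one recovers Delsarte's inequality $c \leq 1 - k/\theta_{\min}(G)$, and shows that when equality holds every vertex $x \notin C$ has exactly $-\theta_{\min}(G)$ neighbours in $C$. This local regularity is what will eventually let a family of Delsarte cliques glue into a geometric structure.

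Next I would carry out the claw-counting argument, which is the heart of the proof. Suppose $G$ contains an induced $K_{1, m+1}$ with $m := -\lfloor \theta_{\min}(G) \rfloor \leq t$, that is, a vertex $x$ with $m+1$ pairwise non-adjacent neighbours $y_0, \dots, y_m$. Double-counting triples $(y_i, y_j, z)$ where $z$ is a common neighbour of $y_i$ and $y_j$ distinct from $x$, and estimating the count via the parameters $(k, \lambda, \mu)$, forces $\mu$ to satisfy a bound of the form $\mu \leq g(t)$ for an explicit polynomial $g$. Conversely, when $G$ contains no such $(m+1)$-claw at any vertex, a neighbourhood decomposition argument shows that for each vertex $x$ the subgraph induced on $N_1(x)$ is a disjoint union of cliques of the Delsarte size; these cliques, extended by $x$, assemble into a set $\mathcal{C}$ of Delsarte cliques such that every edge of $G$ lies in a unique member of $\mathcal{C}$, and hence $G$ is geometric. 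The main obstacle I expect is the claw bound itself: the double counting must be carried out carefully so as to extract an explicit polynomial bound on $\mu$ in terms of $t$ under no further hypothesis than $\theta_{\min}(G) \geq -t$.

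Finally, in the remaining regime $\mu \leq g(t)$ with $\theta_{\min}(G) \geq -t$, the standard formula
\[
\theta_{\min}(G) \;=\; \tfrac{1}{2}\Bigl((\lambda - \mu) - \sqrt{(\lambda - \mu)^2 + 4(k - \mu)}\Bigr)
\]
pins $\lambda - \mu$ and $k - \mu$ into bounded ranges, so $\lambda$ and $k$ are bounded, and then $v$ is bounded via the standard identity $k(k - 1 - \lambda) = \mu(v - k - 1)$. Since all four parameters lie in a finite range and there are only finitely many graphs on a bounded vertex set, only finitely many non-geometric connected strongly regular graphs with $\theta_{\min}(G) \geq -t$ can exist, as required.
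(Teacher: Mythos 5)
The paper offers no proof of this statement: it is quoted verbatim from Neumaier's 1979 paper \cite{Neumaier}, so your proposal can only be measured against the argument in the literature, whose architecture (Delsarte cliques, a claw argument, then parameter bounds) you have correctly identified. The decisive gap is in your last step. Knowing $\mu \le g(t)$ and $\theta_{\min}(G)\ge -t$ does \emph{not} pin down the remaining parameters. Writing $r$ for the second largest eigenvalue and $-m=\theta_{\min}(G)$, the identities $r+(-m)=\lambda-\mu$ and $r\cdot(-m)=\mu-k$ give $\lambda=\mu+r-m$ and $k=\mu+rm$, so bounding $\mu$ and $m$ leaves $r$ (hence $k$, $\lambda$, $v$) completely free; your inequality from the eigenvalue formula only yields $k-\mu\le t(\lambda-\mu)+t^2$, which bounds nothing. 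Concretely, the lattice graphs $L_2(n)$ have $\theta_{\min}=-2$ and $\mu=2$ for every $n$ while $k=2(n-1)\to\infty$, and every Latin square graph $LS_m(n)$ has $\theta_{\min}=-m$, $\mu=m(m-1)$ and unbounded $k$. These are geometric, so they do not contradict the theorem, but they refute the implication you rely on. What Neumaier's claw bound actually supplies is an upper bound on $r$ in terms of $m$ and $\mu$ (roughly $r\le\frac{1}{2}m(m-1)(\mu+1)-1$ unless $G$ is of Steiner or Latin-square type or complete multipartite); only that bound on $r$, combined with a \emph{separate} $\mu$-bound $\mu\le m^3(2m-3)$ in the same exception-free situation, bounds $k$, $\lambda$ and then $v$. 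Your double count is aimed at the wrong parameter.

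The second branch of your dichotomy also fails as stated: bounded independence number of a local graph does not produce a partition into cliques, and there is a concrete counterexample to the conclusion. The graph $K_{n\times m}$ contains no $(m+1)$-claw (its cocliques have size at most $m$), yet it is geometric only when its edge set decomposes into transversal $n$-cliques, i.e.\ when a $TD(n,m)$ (equivalently $n-2$ MOLS of order $m$) exists; this fails for every $n\ge4$ when $m=2$. This is precisely why Neumaier's theorem lists the complete multipartite graphs as a separate infinite family rather than proving them geometric — and it means the statement as quoted in the paper, which omits that exception, is itself not literally correct, since $K_{n\times2}$ for $n\ge4$ already gives infinitely many non-geometric connected strongly regular graphs with smallest eigenvalue $-2$. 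The correct route to the "no large claw" case goes through grand cliques (maximal cliques above an explicit size threshold) and requires $\lambda$ large relative to $m$ and $\mu$, not merely the absence of an $(m+1)$-claw. A final small slip: a vertex outside a Delsarte clique $C$ has exactly $|C|(k-|C|+1)/(v-|C|)$ neighbours in $C$ (this equals $1$, not $2$, for $L_2(n)$), not $-\theta_{\min}(G)$; the correct content of equality in Delsarte's bound is that the partition $\{C,V(G)\setminus C\}$ is equitable.
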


For geometric strongly regular graphs, we have the following result:
\begin{lem}
If $G$ is a geometric strongly regular graph, then $G$ is integrable.
\end{lem}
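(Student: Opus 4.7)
The plan is to construct an explicit embedding of $\Lambda(G)$ into the standard lattice $\mathbb{Z}^m$, indexed by the Delsarte cliques that make $G$ geometric. Let $\mathcal{C}=\{C_1,\ldots,C_m\}$ be a set of Delsarte cliques witnessing the geometric structure, and let $\mathbf{e}_1,\ldots,\mathbf{e}_m$ denote the standard basis of $\mathbb{R}^m$. Define the map $\varphi:V(G)\to\mathbb{Z}^m$ by
\begin{equation*}
\varphi(v):=\sum_{i\,:\,v\in C_i}\mathbf{e}_i.
\end{equation*}
The aim is to show that $\varphi$ is a representation of norm $-\theta_{\min}(G)$. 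Granting this, the integral lattice generated by $\{\varphi(v):v\in V(G)\}$ lies inside $\mathbb{Z}^m$ and has Gram matrix $A(G)-\lfloor\theta_{\min}(G)\rfloor I$, so it is isomorphic to $\Lambda(G)$, witnessing that $\Lambda(G)$ is $1$-integrable.

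The heart of the verification is three short inner product computations. For $u\sim v$, the geometric property states that the edge $uv$ lies in a unique $C_i\in\mathcal{C}$, so $(\varphi(u),\varphi(v))=1$. For $u\not\sim v$ with $u\neq v$, no clique of $\mathcal{C}$ contains both, so $(\varphi(u),\varphi(v))=0$. For $u=v$, I would count the number of cliques through $v$: each Delsarte clique through $v$ contains exactly $|C|-1=-k/\theta_{\min}(G)$ neighbours of $v$, and by the geometric property each of the $k$ neighbours of $v$ is accounted for in exactly one such clique; therefore $v$ lies in $k/\bigl(-k/\theta_{\min}(G)\bigr)=-\theta_{\min}(G)$ cliques, so $(\varphi(v),\varphi(v))=-\theta_{\min}(G)$.

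A secondary but essential observation is that $\theta_{\min}(G)$ is an integer, so that $\lfloor\theta_{\min}(G)\rfloor=\theta_{\min}(G)$ and the Gram matrix just computed is exactly $A(G)-\lfloor\theta_{\min}(G)\rfloor I$. This follows because $\theta_{\min}(G)=-k/(|C|-1)$ is rational, whereas as an eigenvalue of the $0/1$ matrix $A(G)$ it is an algebraic integer, hence in fact an integer. Beyond these elementary checks there is no genuine obstacle: the content of the lemma is simply that the Delsarte bound together with the geometric edge-partition already provides a combinatorial realisation of $\Lambda(G)$ inside $\mathbb{Z}^m$. The only mild subtlety is the bookkeeping that turns the geometric axiom ``every edge lies in a unique clique of $\mathcal{C}$'' into the equality $(\varphi(v),\varphi(v))=-\theta_{\min}(G)$, and this is handled uniformly by the Delsarte identity above.
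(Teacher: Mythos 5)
Your proposal is correct and is essentially the paper's own argument: your map $\varphi$ is exactly the columns of the vertex--clique incidence matrix $N$ used in the paper, and your three inner-product computations are the entrywise verification of the paper's identity $N^TN=A(G)-\theta_{\min}(G)I$. You merely spell out the counting (each vertex lies in $-\theta_{\min}(G)$ Delsarte cliques) and the integrality of $\theta_{\min}(G)$, both of which the paper leaves implicit.
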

\begin{proof}
Without loss of generality, we may assume that $G$ has valency $k$ and smallest eigenvalue $\lambda_{\min}(G)$. (Note that $\lambda_{\min}(G)$ must be an integer.) Let $N$ be the vertex-clique incidence matrix with respect to the set $\mathcal{C}$ of Delsarte cliques, that is $N_{C,x}=\left\{
  \begin{array}{ll}
    1 &\text{ if $x \sim V(C)$} \\
    0 &\text{ otherwise} \\
  \end{array}
\right.$ for $x\in V(G)$ and $C\in\mathcal{C}$. Then $N^TN=A(G)-\theta_{\min}(G) I$ holds and this completes the proof of this lemma.
\end{proof}

Note that the complete multipartite graph $K_{n\times t}=K_{\tiny\underbrace{t,\ldots,t}_n}$, which is the strongly regular graph with parameters $(nt,(n-1)t,(n-2)t,(n-1)t)$ and smallest eigenvalue $-t$, is also integrable. The proof is as follows:

Let $\cup_{i=1}^n V_i$ be the partition of $K_{n\times t}$ into its color classes where $V_i=\{x^i_1,\ldots,x^i_t\}$, and let $\{\mathbf{e},\mathbf{e}_{i,l,j}\mid 1\leq i\leq n, 1\leq l<j\leq t\}$ be an integral orthonormal basis for $\mathbb{R}^{1+\frac{nt(t-1)}{2}}$. Define the $x_j^i$-column $N_{x^i_j}$ of $N$ as follows:
$$N_{x^i_j}:=\mathbf{e}-\sum_{1\leq l<j}\mathbf{e}_{i,l,j}+\sum_{j<l\leq t}\mathbf{e}_{i,j,l}.$$ We find that $N^TN=A(K_{n\times t})+tI$ and this completes the proof.

\subsection{Integrable strongly regular graphs with smallest eigenvalue at least $-2$}
Now let us look at the known results on the strongly regular graphs with smallest eigenvalue at least $-2$. In 1968, Seidel \cite{Seidel} showed that
\begin{thm}\label{srg-2}
 If $G$ is a connected strongly regular graph with smallest eigenvalue $-2$, then $G$ is a triangular graph $T(n)~(n\geq5)$, a lattice graph $L_2(n)~(n\geq3)$, a complete multipartite graph $K_{n\times2}~(n\geq2)$, or one of the graphs of Petersen, Clebsch, Schl\"{a}fli, Shrikhande, or Chang.
\end{thm}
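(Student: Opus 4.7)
The plan is to exploit the root lattice classification already invoked in the introduction. Since $G$ is connected with $\theta_{\min}(G)=-2$, the proof of Theorem~1.1 (as discussed right after it) embeds the lattice $\Lambda(G)$ into one of the irreducible root lattices $A_n$, $D_n$, $E_6$, $E_7$, $E_8$. Equivalently, a norm-$2$ representation $\varphi$ sends each vertex of $G$ to a root of the host lattice, with $(\varphi(x),\varphi(y))=1$ if $x\sim y$ and $0$ otherwise. The proof then splits according to the type of the host root system, and I would read the combinatorial structure of $G$ directly off the geometry of the roots in each case.

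For the $A_n$ case, the only ways two roots $\pm(e_i-e_j)$ can have inner product in $\{0,1\}$ are either to be disjoint or to share exactly one index with matching signs; using connectedness of $G$, one propagates an initial sign choice to identify each $\varphi(x)$ with some $e_i-e_j$ with $i<j$. Vertices are thereby labelled by $2$-subsets of $[n+1]$ and adjacency becomes ``intersect in exactly one element'', so $G$ embeds as an induced subgraph of the triangular graph $T(n+1)$; strong regularity then forces $G=T(n+1)$ for $n+1\geq 5$, while the small-$n$ cases collapse to the Petersen graph (as the complement of $T(5)$) or to a complete multipartite graph.

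For the $D_n$ case, a similar enumeration of admissible root-pair inner products, followed by a sign normalisation powered by connectedness, shows that $\varphi$ realises $G$ either as the lattice graph $L_2(m)$ (vertices identified with edges of a bipartite combinatorial structure) or as the cocktail party graph $K_{m\times 2}$ (vertices paired as antipodal roots); the parameter $\mu$ separates these two possibilities. For the exceptional lattices $E_6$, $E_7$, $E_8$, the root systems are finite (containing $72$, $126$ and $240$ roots), so there are only finitely many graphs to examine; a direct inspection then isolates the remaining candidates Clebsch, Schl\"{a}fli, Shrikhande and Chang.

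The main obstacle I anticipate is precisely the cospectral ambiguity that appears at this last step: $L_2(4)$ and the Shrikhande graph share the parameters $(16,6,2,2)$, and $T(8)$ shares $(28,12,6,4)$ with the three Chang graphs, so the root-lattice argument by itself does not tell us which graph we have landed on. To resolve this I would complement the above with a local combinatorial argument -- tracking, for example, the triangle structure on each neighbourhood, or performing a Seidel switching inside the ambient root system -- to enumerate exactly those representations that are not covered by $T(n)$, $L_2(n)$ or $K_{n\times 2}$ and match them with the five sporadic graphs listed.
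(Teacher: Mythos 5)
The paper does not actually prove Theorem~\ref{srg-2}; it is quoted from Seidel (1968), whose original argument works with the $(-1,1,0)$ Seidel matrix and switching classes rather than with root lattices. Your plan to rederive it from the Cameron--Goethals--Seidel--Shult embedding of $\Lambda(G)$ into $A_n$, $D_n$, $E_6$, $E_7$, $E_8$ is a legitimate and standard alternative route, and you correctly flag the one genuinely delicate point at the end: the cospectral pairs $L_2(4)$/Shrikhande and $T(8)$/Chang cannot be separated by the lattice or by the parameters alone, so some extra local combinatorial work is unavoidable there.

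However, the case analysis as written contains real errors. In the $A_n$ case, adjacency is \emph{not} ``the two $2$-subsets intersect in exactly one element'': the roots $e_i-e_j$ and $e_j-e_k$ share the index $j$ yet have inner product $-1$, which is forbidden, and no global sign normalisation can put every image in the form $e_i-e_j$ with $i<j$ while keeping all inner products in $\{0,1\}$. What the sign constraint actually forces is that the indices occurring with coefficient $+1$ and those with coefficient $-1$ form two disjoint sets, i.e.\ $G$ is the line graph of a \emph{bipartite} graph; the strongly regular graphs arising from $A_n$ are therefore the lattice graphs $L_2(m)=L(K_{m,m})$, not the triangular graphs. The triangular graph $T(m)=L(K_m)$ is instead represented in $D_m$ by the roots $e_i+e_j$, alongside $K_{m\times 2}$ (pairs $e_0\pm e_i$), so your $A_n$ and $D_n$ conclusions are essentially swapped. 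Finally, the Petersen graph cannot ``collapse out'' of the $A_n$ case: it is neither a line graph nor a generalized line graph, hence not representable in any $A_n$ or $D_n$; since $A+2I$ has rank $6$ for it, its lattice is $E_6$, and it belongs with Clebsch, Schl\"{a}fli, Shrikhande and the Chang graphs in the exceptional case. The skeleton of your argument is sound, but the assignment of graph families to root systems must be corrected before the case analysis can be carried through.
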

From (\cite[Corollary 3.12.3]{drg}), we find that if $G$ is a connected strongly regular graph with $v$ vertices, valency $k$ and smallest eigenvalue $\theta_{\min}(G)>-2$, then either $G$ is complete or $G$ is a pentagon. Therefore, the strongly regular graphs with smallest eigenvalue at least $-2$, which are not integrable, are the Petersen graph, Clebsch graph, Schl\"{a}fli graph, Shrikhande graph and the three Chang graphs. Note that all of these seven strongly regular graphs are $2$-integrable, see \cite[Section 3.11]{drg}.

\subsection{Integrable strongly regular graphs with smallest eigenvalue at least $-3$}
In this subsection, we will show the following theorem.
\begin{thm}\label{mainintegrable}
Assume that $G$ is an integrable strongly regular graph with parameters $(v,k,\lambda,\mu)$ and smallest eigenvalue $\theta_{\min}(G)\in[-3,-2)$. If $\mu\geq10$, then $G$ is the strongly regular graph $K_{\frac{v}{3}\times 3}$.
\end{thm}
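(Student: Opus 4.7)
Since $G$ is integrable and $\theta_{\min}(G)\in[-3,-2)$, we have $\lfloor\theta_{\min}(G)\rfloor=-3$, so there is an integral matrix $N$ with $A(G)+3I=N^TN$. Because $(N_x,N_x)=3$ and $N$ has integer entries, every column $N_x$ has exactly three entries in $\{-1,+1\}$ and the rest are zero; write $S_x:=\mathrm{supp}(N_x)$. Computing $(N_x,N_y)\in\{0,1\}$ for distinct $x,y$ forces $|S_x\cap S_y|\in\{1,3\}$ when $x\sim y$ and $|S_x\cap S_y|\in\{0,2\}$ when $x\not\sim y$, together with specific ``match'' versus ``oppose'' sign conditions on the shared rows.

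I would first sharpen Proposition \ref{coclipro} for the integrable case $s=1$: repeating the Gram-matrix argument yields $(\widetilde{N}')^T\widetilde{N}'=3I_{\bar c}-J_{\bar c}$, which is positive semidefinite iff $\bar c\leq 3$, so any coclique of $G(\mathbf{r})$ has order at most $3$. The parallel computation for a clique of $G(\mathbf{r})$ with $c^+$ positive and $c^-$ negative entries in $\mathbf{r}$ gives the bound $c^+c^-\leq 1$. The subcase $\theta_{\min}(G)\in(-3,-2)$ can be disposed of at the outset: an SRG with non-integer smallest eigenvalue is a conference graph $(4t+1,2t,t-1,t)$, and the range $-3<\theta_{\min}<-2$ forces $t\leq 5$, giving $\mu=t\leq 5<10$. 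Henceforth we assume $\theta_{\min}(G)=-3$.

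The heart of the argument is a case analysis at a non-adjacent pair $x\not\sim y$ with $\mu$ common neighbors, split by $|S_x\cap S_y|\in\{0,2\}$. In the case $|S_x\cap S_y|=0$, each common neighbor $z$ must satisfy $|S_z\cap S_x|=|S_z\cap S_y|=1$ (no other configuration is compatible with $|S_z|=3$ and $(N_z,N_x)=(N_z,N_y)=1$), and the signs at the two shared rows are forced to match those of $x$ and $y$. Hence the $\mu$ common neighbors split into at most $9$ classes indexed by $(r,s)\in S_x\times S_y$; within a class, any two neighbors $z,z'$ must also share the same ``third row'' (else $(N_z,N_{z'})=2$, impossible for distinct vertices in an SRG), giving at most $2$ per class. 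A refinement using the clique formed by $x$ together with all common neighbors sharing a fixed $r\in S_x$ (all with matching sign at $r$), combined with Delsarte's clique bound $1+k/3$, the sharpened coclique/clique bounds above, and the SRG feasibility identity $k(k-1-\lambda)=(v-1-k)\mu$, squeezes this down to $\mu\leq 9$, contradicting $\mu\geq 10$. In the complementary case $|S_x\cap S_y|=2$, Proposition \ref{basicpro}(ii) applied to the partition $\pi(x)=\{\{x\},N_1(x),N_2(x)\}$ and the two rows in $S_x\cap S_y$ forces a rigid structure on the common neighborhood; propagating this gives the twin relation $N(x)=N(y)$ for every non-adjacent pair, whence $\mu=k$. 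Thus $G$ is complete multipartite $K_{n\times t}$, and $\theta_{\min}(G)=-t=-3$ yields $G=K_{v/3\times 3}$.

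The principal technical obstacle is the case $|S_x\cap S_y|=0$: the coarse count only gives $\mu\leq 2\cdot 9=18$, and tightening this to the sharp $\mu\leq 9$ is where the Gram-matrix bounds, Delsarte's bound, and the strict SRG-feasibility identities all have to be combined. The threshold $\mu\geq 10$ in the theorem appears precisely because $9$ is the largest $\mu$ compatible with any non-adjacent pair in the zero-overlap configuration, after which the only surviving structure is the complete multipartite one.
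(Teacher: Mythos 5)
Your framework (columns of $N$ with three $\pm 1$ entries, support intersections $|S_x\cap S_y|\in\{1,3\}$ for adjacent and $\{0,2\}$ for non-adjacent pairs) matches the paper's, but two essential steps are missing. First, you never dispose of the configuration $|S_x\cap S_y|=3$ for an adjacent pair. The paper shows at the outset that this configuration forces $N_G(x)\setminus\{y\}=N_G(y)\setminus\{x\}$, hence $\lambda=k-1$ and $G$ a disjoint union of cliques, which is impossible here; once that is known, two common neighbours of a non-adjacent pair with $|S_x\cap S_y|=0$ lying in the same class $(r,s)\in S_x\times S_y$ would have supports meeting in all three positions (the sign products at $r$ and $s$ are both $+1$, so the third coordinates must coincide with product $-1$ to keep $(N_z,N_{z'})\leq 1$), a contradiction --- so each class has at most one member and $\mu\leq 9$ follows immediately. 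Without that exclusion you only get your $\mu\leq 18$, and your proposed refinement does not close the gap: the vertices sharing a fixed signed row $r$ with $x$ need not form a clique (two such columns can meet in a second row with opposite signs, giving inner product $0$), so Delsarte's bound does not apply to that set, and the feasibility identity $k(k-1-\lambda)=(v-1-k)\mu$ is never turned into a concrete inequality.

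Second, the case $|S_x\cap S_y|=2$, which is where $K_{\frac{v}{3}\times 3}$ actually emerges, is only asserted. Proposition \ref{basicpro}(ii) constrains the sums of a \emph{row} of $N$ over the cells of $\pi(x)$ and says nothing directly about column supports; it is not the tool the paper uses here, and you do not show how it would ``force a rigid structure on the common neighborhood.'' The paper instead proves a global sign-coherence property (if two columns agree in sign on a row, no column may carry the opposite sign there), uses it to show that the set $W_{u,v}$ of common non-neighbours of a non-adjacent pair has size at most $1$, rules out $|W_{u,v}|=0$ by a separate analysis of $k-\mu$ (leading to an impossible $2$-regular, triangle-free complement), and only then deduces $N_G(u)=N_G(v)=N_G(w)$, $\mu=k$, and that $\overline{G}$ is a disjoint union of triangles. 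None of this is present in your sketch, so the conclusion $G=K_{\frac{v}{3}\times 3}$ is not reached. (Your preliminary reduction of the non-integral-eigenvalue case via conference graphs is correct but unnecessary, since $A(G)-\lfloor\theta_{\min}(G)\rfloor I=N^TN$ already covers the whole range $[-3,-2)$.)
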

\begin{proof}
Let $\mu\geq10$. For convenience, we denote by $W_{x,y}:=\{w\mid w\not\sim x,w\not\sim y \text{ and }w\neq x,y\}$ the set of common nonadjacent vertices of $x$ and $y$. It is easy to see that
\begin{equation}\label{numberofv}
v=
\begin{cases}
2k-\lambda+|W_{x,y}| &\text{ if }x\sim y,\\
2+2k-\mu+|W_{x,y}| &\text{ if }x\not\sim y.
\end{cases}
\end{equation}
Since $G$ is integrable, there exists an integral matrix $N$ such that $$A(G)+3I=N^TN$$ holds. Note that $N_{i,j}\in\{1,-1,0\}$ for any $i,j$. For any vertex $x$, we denote by $N_x$ the $x$-column of $N$ and ${\rm supp}(N_x):=\{i\mid (N_x)_i\neq0\}$ the support of $N_x$.

For any two distinct vertices $x$ and $y$, we first loot at the case $|{\rm supp}(N_x)\cap {\rm supp}(N_y)|=3$. It follows immediately that $x$ and $y$ are adjacent and we may assume $N_x=\mathbf{e}_1+\mathbf{e}_2+\mathbf{e}_3$ and $N_y=\mathbf{e}_1+\mathbf{e}_2-\mathbf{e}_3.$ For any vertex $z\in V(G)-\{x,y\}$, if $z$ is adjacent to $x$, then $(N_z,N_x)=1$. As $0\leq(N_z,N_y)\leq1$, we find that $N_z=\mathbf{e}_1+\mathbf{e}_i+\mathbf{e}_j$ or $N_z=\mathbf{e}_2+\mathbf{e}_i+\mathbf{e}_j$ for some $3<i<j$, and thus $z\sim y$. This implies that $N_G(x)-\{y\}=N_G(y)-\{x\}$, that is, $\lambda=k-1$. Hence, we obtain that $G$ is the strongly regular graph $\frac{v}{k+1}K_{k+1}$ (see \cite[Lemma 10.1.1]{Ch&G}), which has smallest eigenvalue $-2$. This gives a contradiction. Therefore, we have
\begin{equation}\label{no3commonpositions}
|{\rm supp}(N_x)\cap {\rm supp}(N_y)|=1, \text{ if }x\sim y.
\end{equation}
From now on, we may assume $x\not\sim y$, and we will show that
\begin{equation}\label{commonposition}
|{\rm supp}(N_x)\cap {\rm supp}(N_y)|=2, \text{ if }x\not\sim y.
\end{equation}
Suppose not, then $|{\rm supp}(N_x)\cap {\rm supp}(N_y)|=0$ and we may assume $N_x=\mathbf{e}_1+\mathbf{e}_2+\mathbf{e}_3,~N_y=\mathbf{e}_4+\mathbf{e}_5+\mathbf{e}_6.$ Since $\mu\geq10$, there must exist two vertices $z,z^\prime\in N_G(x)\cap N_G(y)$ such that $|{\rm supp}(N_z)\cap {\rm supp}(N_{z^\prime})|=3$. This is not possible and (\ref{commonposition}) holds. Moreover, if there exists an integral unit vector $\mathbf{e}$ such that $(N_x,\mathbf{e})=(N_y,\mathbf{e})=1$, then we have
\begin{equation}\label{samesign}
(N_z,\mathbf{e})\geq0, \text{ for }z\in V(G).
\end{equation}
Otherwise let $N_x:=\mathbf{e}+\mathbf{e}'+\mathbf{e}^{\prime\prime}$ and $N_y:=\mathbf{e}-\mathbf{e}'+\mathbf{e}^{\prime\prime\prime}$ be the $x$-column and $y$-column of $N$. It is easy to see that, by (\ref{no3commonpositions}), there exist at least $\mu-1$ vertices in the set $N_G(x)\cap N_G(y)$ such that the columns of $N$ indexed by these vertices have inner product $1$ with $\mathbf{e}$, and $0$ with all $\mathbf{e}',~\mathbf{e}^{\prime\prime}$ and $\mathbf{e}^{\prime\prime\prime}$. Suppose that there exists a vertex $z$ such that $(N_z,\mathbf{e})=-1$, then $N_z=-\mathbf{e}+\mathbf{e}^{\prime\prime}+\mathbf{e}^{\prime\prime\prime}$. In this case, $N_z$ has inner product $-1$ with the column of $N$ indexed by some vertex in the set $N_G(x)\cap N_G(y)$. This is not possible and (\ref{samesign}) holds.

Given any two nonadjacent vertices $u$ and $v$, we may assume, by (\ref{commonposition}), that
$$N_u=\mathbf{e}_1+\mathbf{e}_2+\mathbf{e}_3,~N_v=\mathbf{e}_1-\mathbf{e}_2+\mathbf{e}_4.$$
Let us look at the set $W_{u,v}$. For any vertex $w\in W_{u,v}$, if $(N_w,\mathbf{e}_2)=1$, then we have, by (\ref{samesign}), that $(N_v,\mathbf{e}_2)=-1\geq0$ as $w\not\sim u$ and $(N_w,\mathbf{e}_2)=(N_u,\mathbf{e}_2)=1$. This is not possible. Thus $(N_w,\mathbf{e}_2)\neq1$. Similarly, $(N_w,-\mathbf{e}_2)\neq1$. By using (\ref{samesign}) again, we find that $(N_w,\mathbf{e}_1)\geq0$ and thus
$N_w=\mathbf{e}_1-\mathbf{e}_3-\mathbf{e}_4$. This implies that $$|W_{u,v}|\leq1.$$

If $|W_{u,v}|=0$, then $v=2+2k-\mu$ by (\ref{numberofv}). For the number $k-\mu$, which is the cardinality of the set $N_G(u)-N_G(u)\cap N_G(v)$, we have $k-\mu\geq1$ otherwise $G=K_{\frac{v}{2}\times 2}$ with $\theta_{\min}(G)=-1$. For any vertex $u'\in N_G(u)-N_G(u)\cap N_G(v)$, we have $(N_{u'},-\mathbf{e}_2)\neq1$. Otherwise $(N_u,-\mathbf{e}_2)=-1\geq0$ by (\ref{samesign}), as $u'\not\sim v$ and $(N_{u'},-\mathbf{e}_2)=(N_v,-\mathbf{e}_2)=1$. This is not possible. Note that $|{\rm supp}(N_{u'})\cap {\rm supp}(N_u)|=1$, $|{\rm supp}(N_{u'})\cap {\rm supp}(N_v)|=2$ and $(N_{u'},\mathbf{e}_1)\geq0$ by (\ref{no3commonpositions})-(\ref{samesign}). Thus,
$$N_{u'}=\mathbf{e}_1-\mathbf{e}_4+\mathbf{e}_i \text{ or }N_{u'}=\mathbf{e}_2+\mathbf{e}_4+\mathbf{e}_i \text{ for some }i\geq5.$$
If $k-\mu=|N_G(u)-N_G(u)\cap N_G(v)|=|\{u',u^{\prime\prime}\}|=2$, then we may assume $N_{u'}=\mathbf{e}_1-\mathbf{e}_4+\mathbf{e}_i$ and $N_{u^{\prime\prime}}=\mathbf{e}_2+\mathbf{e}_4+\mathbf{e}_i$ for some $i\geq5$. As $v\not\sim u^{\prime\prime}$ and $(N_v,\mathbf{e}_4)=(N_{u^{\prime\prime}},\mathbf{e}_4)=1$, we have, by (\ref{samesign}), that $(N_{u^{\prime}},\mathbf{e}_4)=-1\geq0$. This gives the contradiction. Thus, $k-\mu=1$ and $v=3+k$. We conclude that the complement $\overline{G}$ of $G$ is $2$-regular and any two adjacent vertices of $\overline{G}$ have no common neighbors. Since $\overline{G}$ is a strongly regular graph with at least $12$ vertices, we find that this is not possible. Therefore, $|W_{u,v}|=1$.

Assume that $W_{u,v}=\{w\}$, then $N_w=\mathbf{e}_1-\mathbf{e}_3-\mathbf{e}_4$ and it is easy to check that $N_{G}(u)=N_{G}(v)=N_{G}(w)$, that is, $\mu=k$ and $v=2+k+1$ by (\ref{numberofv}). We conclude that the complement $\overline{G}$ of $G$ is $2$-regular and any two adjacent vertices of $\overline{G}$ have exactly one common neighbor, that is, $\overline{G}$ is a disjoint union of $K_3$. This implies that $G$ is the graph $K_{\frac{v}{3}\times 3}$.

This completes the proof.
\end{proof}

\section{The complement of the Sims-Gewirtz graph}
In 1969, A. Gewirtz \cite{gewirtz} showed that the strongly regular graph with parameters $(56,10,0,2)$ exists and is unique, named the Sims-Gewirtz graph. Thus, the strongly regular graph, by (\ref{compleparameter}), with parameters $(56,45,36,36)$ is also unique, which is the complement of the Sims-Gewirtz graph, and its spectrum is $\{45^{(1)},3^{(20)},-3^{(35)}\}$. By Theorem \ref{mainintegrable}, we easily find that the latter strongly regular graph is not integrable. Note that
\begin{lem}(\cite[Theorem 4]{2quasidesign})\label{lemforgewirtz}
The block graph of the quasi-symmetric $2$-$(21,6,4)$ design with intersection numbers $l_{1}=0$ and $l_{2}=2$ is strongly regular with parameters $(56,45,36,36)$.
\end{lem}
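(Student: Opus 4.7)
The plan is to invoke Lemma \ref{quasisymmetric} to obtain strong regularity of the block graph for free, and then verify the four parameters $(v,k,\lambda,\mu) = (56,45,36,36)$ by elementary double counting in the underlying $2$-$(21,6,4)$ design. No structural obstacle is expected; everything reduces to standard parameter arithmetic for designs.

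First I would compute the number of vertices of the block graph. By Fisher's identity, the number of blocks in a $2$-$(\tilde v,\tilde k,\tilde\lambda)$ design is $b = \tilde\lambda \tilde v(\tilde v-1)/(\tilde k(\tilde k-1))$, which in our case gives $b = 4 \cdot 21 \cdot 20 / (6 \cdot 5) = 56$. Similarly, using the fact that the design is also a $1$-design (Lemma \ref{designlem} with $\tilde s=1$), the replication number is $r = \tilde\lambda(\tilde v-1)/(\tilde k - 1) = 16$. Next I would compute the valency: fix a block $B$ and count blocks $B' \neq B$ with $|B \cap B'| = 2$. Double counting pairs $(\{x,y\}, B')$ with $\{x,y\} \subseteq B \cap B'$, the number of pairs inside $B$ is $\binom{6}{2} = 15$, each pair lies in $\tilde\lambda - 1 = 3$ blocks other than $B$, and since intersections have size at most $2$ each such $B'$ is counted exactly once; so the valency is $15 \cdot 3 = 45$.

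For $\mu$, I would pick two disjoint blocks $B_1, B_2$ (which exist because the valency $45$ is less than $b-1 = 55$, so non-neighbors are present) and count blocks $B_3$ with $|B_3 \cap B_1| = |B_3 \cap B_2| = 2$. Double counting pairs $(\{x,y\},B_3)$ with $x \in B_1$, $y \in B_2$, and $\{x,y\} \subseteq B_3$: there are $6 \cdot 6 = 36$ such cross pairs (using $B_1 \cap B_2 = \emptyset$), each lies in $\tilde\lambda = 4$ blocks, and each qualifying $B_3$ is counted exactly $|B_3\cap B_1|\cdot|B_3\cap B_2| = 4$ times (any block through such a pair automatically has intersection exactly $2$ with each $B_i$ since intersections are in $\{0,2\}$ and are already nonzero). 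This yields $\mu = 36 \cdot 4/4 = 36$. Finally, $\lambda$ drops out of the standard SRG identity $k(k-\lambda-1) = (v-k-1)\mu$, giving $45(44-\lambda) = 10 \cdot 36$ and hence $\lambda = 36$.

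The only subtle point to check in detail is the step that each block $B_3$ containing a cross pair $\{x,y\}$ is distinct from $B_1$ and $B_2$ (immediate, since $y \notin B_1$ and $x \notin B_2$) and has $|B_3 \cap B_i| = 2$ exactly (immediate from the quasi-symmetry hypothesis $l_1=0$, $l_2=2$). Once this is noted, the parameters are forced and the lemma follows directly from Lemma \ref{quasisymmetric}.
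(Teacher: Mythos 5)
Your proof is correct. Note that the paper does not prove this lemma at all: it is quoted verbatim as Theorem 4 of the cited reference of Nyatate, Pawale and Shrikhande, so you are supplying an argument where the authors supply only a citation. Your verification is the standard one and all the arithmetic checks out: $b=4\cdot 21\cdot 20/(6\cdot 5)=56$; the valency count $15\cdot 3=45$ is right because a block meeting $B$ must meet it in exactly $2$ points (so it is counted once per the unique pair $B\cap B'$); and the $\mu$ count is right because a block containing a cross pair meets each $B_i$ nontrivially, hence in exactly $2$ points, and so carries exactly $2\cdot 2=4$ cross pairs, giving $36\cdot 4/4=36$. Deducing $\lambda$ from $k(k-\lambda-1)=(v-k-1)\mu$ is legitimate once Lemma~\ref{quasisymmetric} guarantees strong regularity (one could also double-count $\lambda$ directly, but that is not needed). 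The only thing I would tighten is the parenthetical justification for the existence of disjoint blocks: the cleaner reason is that each point of $B$ lies in $r-1=15$ further blocks, so at most $6\cdot 15=90/2=45$ blocks meet $B$, leaving $55-45=10>0$ blocks disjoint from it; your "valency less than $b-1$" remark presupposes the valency computation, which is fine, but it is worth saying that this is why non-neighbours exist.
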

Thus, we have the following result.
\begin{thm}\label{complementofsg}
The strongly regular graph $\overline{SG}$, which is the complement of the Sims-Gewirtz graph, with parameters $(56,45,36,36)$ and smallest eigenvalue $-3$ is $2$-integrable.
\end{thm}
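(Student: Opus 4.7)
The plan is to exhibit an explicit integral matrix $N$ realizing $A(\overline{SG})+3I=\tfrac{1}{2}N^{T}N$ directly from the quasi-symmetric design provided by Lemma \ref{lemforgewirtz}. Let $(S,\mathcal{B})$ denote a quasi-symmetric $2$-$(21,6,4)$ design with intersection numbers $l_1=0$ and $l_2=2$ whose block graph is $\overline{SG}$, and let $\mathcal{I}$ be its $21\times 56$ incidence matrix, which is a $01$-matrix.

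The key observation is the identity
$$\mathcal{I}^{T}\mathcal{I}=6\,I_{56}+2\,A(\overline{SG}).$$
Indeed, the $(B,B')$-entry of $\mathcal{I}^{T}\mathcal{I}$ equals $|B\cap B'|$: when $B=B'$ this is the block size $\tilde{k}=6$; when $B\neq B'$ are adjacent in the block graph their intersection has size $l_2=2$; and when $B\neq B'$ are nonadjacent the intersection has size $l_1=0$. Rearranging yields
$$A(\overline{SG})+3\,I_{56}=\tfrac{1}{2}\,\mathcal{I}^{T}\mathcal{I},$$
so setting $N:=\mathcal{I}$ provides an integral matrix satisfying the relation characterizing $2$-integrability given at the end of Section 2.1. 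Hence $\overline{SG}$ is $2$-integrable.

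The main (and essentially only) obstacle is having such a design at hand, which is exactly what Lemma \ref{lemforgewirtz} guarantees; given that, the argument reduces to the one-line computation above, together with the trivial verification that $\mathcal{I}$ has integer entries. I also note that this result is sharp: combining it with Theorem \ref{mainintegrable} --- which forbids integrability of $\overline{SG}$ since its smallest eigenvalue $-3$ lies in $[-3,-2)$, its parameter $\mu=36$ exceeds $9$, and $\overline{SG}$ is not of the form $K_{\frac{v}{3}\times 3}$ (as $56/3\notin\mathbb{Z}$) --- confirms that $s=2$ is the smallest integer for which $\overline{SG}$ is $s$-integrable.
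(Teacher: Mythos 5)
Your proposal is correct and is essentially identical to the paper's own proof: both take $N=\mathcal{I}$, the incidence matrix of the quasi-symmetric $2$-$(21,6,4)$ design from Lemma \ref{lemforgewirtz}, and observe that $\mathcal{I}^{T}\mathcal{I}=6I+2A(\overline{SG})$ by counting block intersections, giving $A(\overline{SG})+3I=\frac{1}{2}\mathcal{I}^{T}\mathcal{I}$. The only detail worth keeping explicit is the one the paper also invokes: the uniqueness of the strongly regular graph with parameters $(56,45,36,36)$ is what lets you identify $\overline{SG}$ with the block graph of that design in the first place.
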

\begin{proof}By using Lemma \ref{lemforgewirtz} and the fact that the strongly regular graph with parameters $(56,45,36,36)$ is unique, we conclude that $\overline{SG}$ is the block graph of the quasi-symmetric $2$-$(21,6,4)$ design with intersection numbers $l_{1}=0$ and $l_{2}=2$. Denote by $\mathcal{I}$ the incidence matrix of this design. We find that $2A(\overline{SG})+6I=\mathcal{I}^T\mathcal{I}$, that is,
$$A(\overline{SG})+3I=\frac{1}{2}\mathcal{I}^T\mathcal{I}.$$
This completes the proof.
\end{proof}

\section{The Hoffman-Singleton graph}
The Hoffman-Singleton graph, constructed by A.J. Hoffman and R.R. Singleton in 1960 \cite{AJR}, is the unique strongly regular graph with parameters $(50,7,0,1)$ and spectrum  $\{7^{(1)},2^{(28)},(-3)^{(21)}\}$. For the uniqueness, see \cite{HSunique}. Here we give a construction of the Hoffman-Singleton graph, which is introduced by N.Robertson, see \cite[p. 391]{drg}.

Let $P^i$ and $Q^i$  be the graphs in Figure \ref{figure},
\begin{center}
\begin{figure}[H]
\includegraphics[scale=0.7]{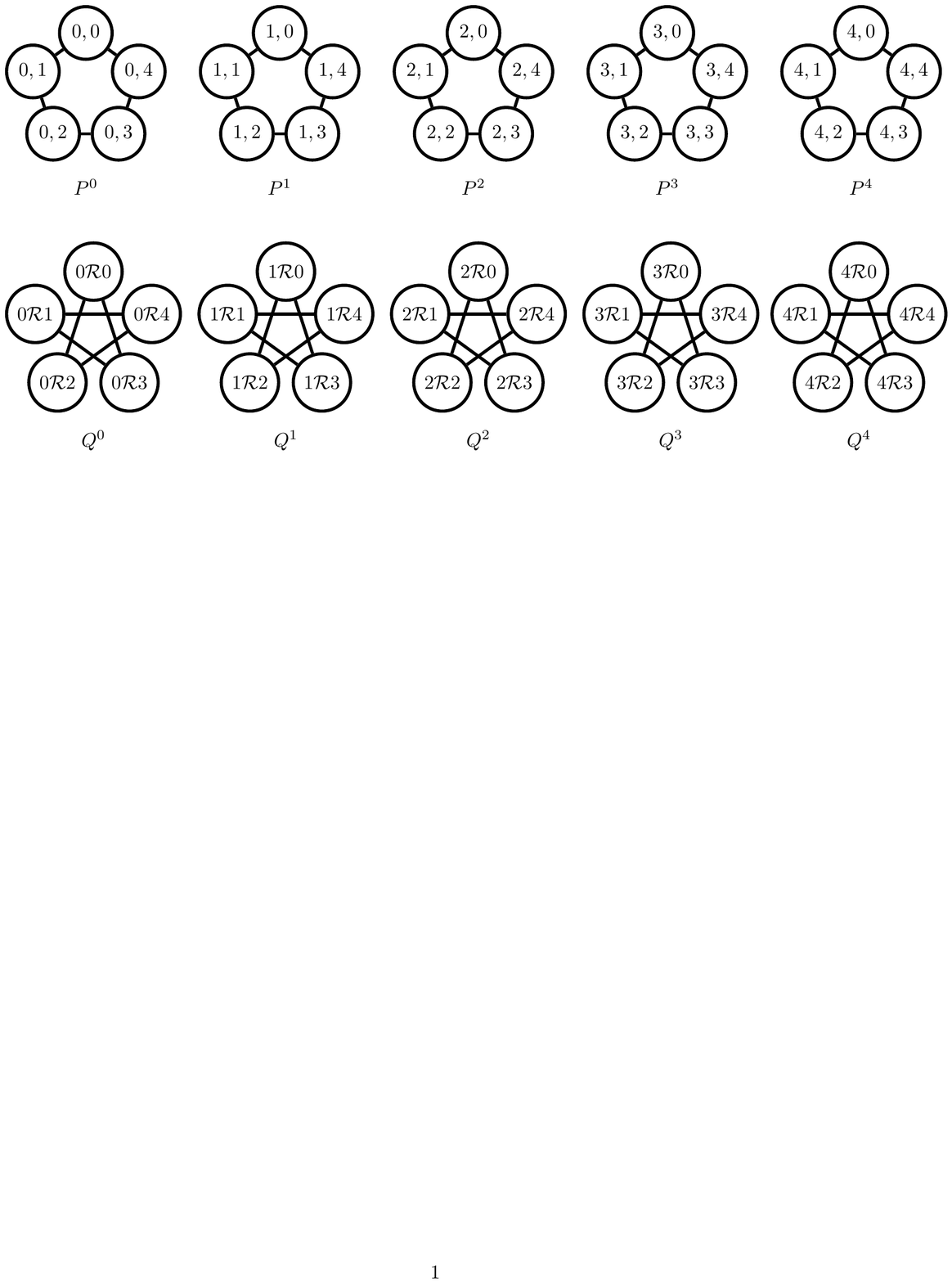}
{\caption{}\label{figure}}
\end{figure}
\end{center}
where $V(P^i)=\{(i,0),\ldots,(i,4)\}$ and $V(Q^i)=\{i\mathcal{R}0,\ldots,i\mathcal{R}4\}$. Now join vertex $i\mathcal{R}l$ of $Q^i$ to vertex $(j,i\cdot j+l)$ of $P^j$ for all $0\leq i,j,l\leq4$. Here $i\cdot j+l$ is calculated modulo $5$. This yields the Hoffman-Singleton graph.

For the pentagons in a Hoffman-Singleton graph, C. Fan and A.J. Schwenk \cite{structure} showed the following.
\begin{pro}\label{StructureHS}
Suppose that $HoSi$ is a Hoffman-Singleton graph with a pentagon $H_0$ as a subgraph. Denote by $N_i(H_0)$ the set of vertices not in $H_0$ each of which has distance $i$ from some vertex of $H_0$, where $i=1,2$. Then the subgraph induced by $N_1(H_0)$ is a disjoint union of $5$ pentagons $H^0,\ldots,H^4$ and the subgraph induced by $N_2(H_0)$ is a disjoint union of $4$ pentagons $H_1,\ldots,H_4$. Moreover, the induced subgraph on $V(H_i)\cup V(H^j)$ is a Petersen graph for $0\leq i,j\leq4$.
\end{pro}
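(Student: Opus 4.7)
The plan is to leverage the Moore-graph properties of $HoSi$ (girth $5$, $\mu = 1$, diameter $2$) to constrain the structure around $H_0$, and then invoke uniqueness of the parameter set $(50,7,0,1)$ to pin down the cycle decomposition precisely.

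Write $H_0 = x_0 x_1 x_2 x_3 x_4$ and set $S_i = N(x_i) \setminus V(H_0)$. Girth $5$ forces $|S_i| = 5$ and the $S_i$'s to be pairwise disjoint (two neighbors in $H_0$ would create a $3$- or $4$-cycle), so $|N_1(H_0)| = 25$ and $|N_2(H_0)| = 20$. Within $N_1(H_0)$, girth $5$ forbids edges inside each $S_i$ or between $S_i$ and $S_{i \pm 1}$, while for $u \in S_i$ the unique common neighbor of $u$ and $x_{i \pm 2}$ must lie in $S_{i \pm 2}$ (no vertex of $H_0$ qualifies), yielding exactly one edge from $u$ to each of $S_{i+2}$ and $S_{i-2}$. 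Hence the induced subgraph on $N_1(H_0)$ is $2$-regular with matchings between $S_i$ and $S_{i \pm 2}$, and every cycle follows the rotational order $S_0 \to S_2 \to S_4 \to S_1 \to S_3 \to S_0$, so each cycle has length a multiple of $5$. Similarly, diameter $2$ gives that each $w \in N_2(H_0)$ is at distance exactly $2$ from every $x_i$, so $\mu = 1$ produces one neighbor of $w$ in each $S_i$; consequently $w$ has $5$ neighbors in $N_1(H_0)$ and $2$ in $N_2(H_0)$, making the subgraph induced on $N_2(H_0)$ also $2$-regular on $20$ vertices.

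The main obstacle is ruling out cycle components of length $\geq 10$ in these two $2$-regular subgraphs. My preferred route exploits the uniqueness of $HoSi$: taking $H_0 = P^0$ in the Robertson construction from the excerpt, one reads off directly that $N_1(P^0) = \bigsqcup_{i=0}^{4} V(Q^i)$ is five disjoint pentagons (each $Q^i$ is isomorphic to $C_5$ and there are no edges between different $Q^i$'s) and $N_2(P^0) = \bigsqcup_{j=1}^{4} V(P^j)$ is four disjoint pentagons. Because $HoSi$ is determined up to isomorphism by its parameters and its automorphism group acts transitively on $5$-cycles, every pentagon $H_0$ maps to $P^0$ under some automorphism and the neighborhood structure transfers verbatim. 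A purely combinatorial alternative would track the permutation $\pi$ of $S_0$ obtained by composing the five matchings around the pentagram pattern $S_0 \to S_2 \to S_4 \to S_1 \to S_3 \to S_0$; the cycle type of $\pi$ dictates the component sizes of $N_1(H_0)$, and one would need to force $\pi = \mathrm{id}$ using consistency conditions drawn from $\mu = 1$ applied across $N_2(H_0)$.

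For the Petersen claim, with the $5 + 4$ pentagon decomposition in place, each vertex $u \in H^j \subset S_j$ has $4$ neighbors in $N_2(H_0)$, and I would show these are distributed one per $H_i$. The five vertices of a given $H_i$ each have a unique neighbor in $S_j$, and those five neighbors are distinct in $S_j$ (otherwise $\mu = 1$ would be violated by two vertices of $H_i$ sharing a neighbor). Hence the induced subgraph on $V(H_i) \cup V(H^j)$ is $3$-regular on $10$ vertices, consists of two vertex-disjoint $5$-cycles joined by a perfect matching, and is both triangle-free and quadrilateral-free (inherited from $HoSi$); since the Petersen graph is the unique $3$-regular graph on $10$ vertices with girth $5$, the induced subgraph must be the Petersen graph.
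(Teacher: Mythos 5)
The paper does not prove this proposition; it is quoted from Fan and Schwenk \cite{structure}, so there is no in-paper proof to compare against and your attempt must stand on its own. Most of your local analysis does: the disjointness of the sets $S_i=N(x_i)\setminus V(H_0)$, the counts $|N_1(H_0)|=25$ and $|N_2(H_0)|=20$, the perfect matchings between $S_i$ and $S_{i\pm2}$ forcing every cycle of the induced subgraph on $N_1(H_0)$ to have length divisible by $5$, the fact that each $w\in N_2(H_0)$ has exactly one neighbour in each $S_i$ and hence two in $N_2(H_0)$, and the final Petersen-graph identification (two disjoint $5$-cycles joined by a perfect matching, $3$-regular, girth $5$) are all correct and cleanly argued.

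The genuine gap is exactly the step you flag as the main obstacle, and neither of your two routes closes it. Route 1 needs $\mathrm{Aut}(HoSi)$ to be transitive on pentagons, but Proposition \ref{AutHS} only gives transitivity on the $126$ sets of $10$ pairwise disjoint pentagons; to transport an \emph{arbitrary} pentagon $H_0$ to $P^0$ you would first need to know that $H_0$ lies in such a set, and the natural way to see that is precisely the decomposition $\{H_0,H^0,\dots,H^4,H_1,\dots,H_4\}$ you are trying to prove -- so as written the argument is circular (pentagon-transitivity is true, but it must be established independently, e.g.\ by exhibiting enough automorphisms or by first proving this proposition combinatorially). Route 2 is explicitly left unfinished: you state that one ``would need to force $\pi=\mathrm{id}$'' without supplying the consistency argument, and for $N_2(H_0)$ there is not even a candidate class structure playing the role of the $S_i$, so the claim that its $2$-regular induced subgraph splits into four pentagons is entirely unproven. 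Since everything in the proposition beyond routine counting reduces to exactly this decomposition into $5$-cycles, the proof is incomplete; the missing case analysis (ruling out components of length $10$, $15$, $20$ or $25$ via $\mu=1$ applied to pairs in $S_i$ and $S_{i+1}$ together with the neighbours in $N_2(H_0)$) is the real content of Fan and Schwenk's argument.
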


In \cite{AutHS}, P.R. Hafner showed that

\begin{pro}\label{AutHS}
In a Hoffman-Singleton graph, there are $1260$ pentagons and $126$ sets of $10$ disjoint pentagons. The automorphism group of the Hoffman-Singleton graph acts transitively on the set of all sets of $10$ disjoint pentagons.
\end{pro}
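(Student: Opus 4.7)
The plan is to establish the three claims of the proposition in turn, exploiting Proposition~\ref{StructureHS} and the spectrum of $HoSi$.

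For the pentagon count, I would argue spectrally. Since $\lambda=0$ and $\mu=1$, the graph $HoSi$ is triangle-free and $C_{4}$-free, hence has girth $5$. In such a graph every closed walk of length $5$ must trace a pentagon (any coincidence among $v_{0},\ldots,v_{4}$ would force a triangle), and conversely each pentagon contributes exactly $10$ closed walks of length $5$ (one per basepoint and direction). The spectrum $\{7^{(1)},2^{(28)},(-3)^{(21)}\}$ then gives
\[
\tr(A^{5})=7^{5}+28\cdot 2^{5}+21\cdot(-3)^{5}=16807+896-5103=12600,
\]
so the number of pentagons equals $12600/10=1260$.

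For the count of ten-pentagon sets I would fix a pentagon $H_{0}$ and apply double counting. Proposition~\ref{StructureHS} supplies at least one partition of $V(HoSi)$ into ten disjoint pentagons containing $H_{0}$, namely $\{H_{0},H^{0},\ldots,H^{4},H_{1},\ldots,H_{4}\}$. I would then argue this partition is unique: any pentagon $P$ disjoint from $H_{0}$ lies in $N_{1}(H_{0})\cup N_{2}(H_{0})$, and by analysing how such a $P$ would meet each of the Petersen subgraphs induced on $V(H_{i})\cup V(H^{j})$ (whose only pentagon components are $H_{i}$ and $H^{j}$), one rules out $P$ straddling $N_{1}(H_{0})$ and $N_{2}(H_{0})$, so that $P\in\{H^{0},\ldots,H^{4},H_{1},\ldots,H_{4}\}$. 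Consequently every pentagon belongs to exactly one ten-pentagon set, and double counting (pentagon, containing set) pairs yields $1260\cdot 1=10m$, whence $m=126$.

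For the transitivity of $\mathrm{Aut}(HoSi)$ on ten-pentagon sets, the uniqueness just established provides an $\mathrm{Aut}(HoSi)$-equivariant surjection from pentagons onto ten-pentagon sets, so it suffices to show that $\mathrm{Aut}(HoSi)$ acts transitively on pentagons. For this I would combine two ingredients: first, every directed $3$-arc $v_{0}v_{1}v_{2}v_{3}$ in $HoSi$ extends uniquely to a pentagon, because $v_{0}$ and $v_{3}$ are nonadjacent (girth $5$) and thus share a unique common neighbor $v_{4}$ by $\mu=1$, with $v_{4}\notin\{v_{1},v_{2}\}$ forced by the same girth constraints; and second, $HoSi$ is $3$-arc-transitive, a classical feature of $\mathrm{Aut}(HoSi)\cong\mathrm{P}\Sigma\mathrm{U}(3,5^{2})$. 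The main obstacles are the uniqueness step in the second paragraph, which needs the Petersen structure of Proposition~\ref{StructureHS} unpacked carefully, and the $3$-arc-transitivity; for the latter the cleanest route is to invoke Hafner's explicit determination of $\mathrm{Aut}(HoSi)$~\cite{AutHS}, though one could also build the required automorphisms from the Robertson construction via the cyclic shift $i\mapsto i+1$ on the labels of the $P^{i}$ and $Q^{i}$.
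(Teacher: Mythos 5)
The paper itself offers no proof of this proposition --- it is quoted verbatim from Hafner \cite{AutHS} --- so your attempt is being judged as a self-contained argument. Your first paragraph is correct: girth $5$ forces every closed $5$-walk to be a $5$-cycle, $\tr(A^5)=12600$, and each pentagon accounts for $10$ closed walks, giving $1260$. Your third paragraph is also sound in outline (unique extension of $3$-arcs to pentagons plus $3$-arc-transitivity gives transitivity on pentagons, and the equivariant pentagon-to-set map then transports transitivity to the sets), though in citing Hafner for $\mathrm{Aut}(HoSi)$ you are leaning on the very source the paper quotes.

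The genuine gap is in the second paragraph. The parenthetical claim that the Petersen graph on $V(H_i)\cup V(H^j)$ has ``only pentagon components $H_i$ and $H^j$'' is false: a Petersen graph contains twelve pentagons, ten of which use vertices of both its two distinguished disjoint pentagons. Hence each of the twenty Petersen subgraphs $V(H_i)\cup V(H^j)$ already supplies ten pentagons that are disjoint from $H_0$ yet straddle $N_1(H_0)$ and $N_2(H_0)$, so the conclusion ``$P$ disjoint from $H_0$ implies $P\in\{H^0,\dots,H^4,H_1,\dots,H_4\}$'' cannot hold; indeed, since exactly $1260\cdot 5/50=126$ pentagons pass through any given vertex, at least $1260-5\cdot 126=630$ pentagons are disjoint from $H_0$. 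What your double count $1260\cdot 1=10m$ actually requires is the much weaker statement that no partition of $V(HoSi)\setminus V(H_0)$ into nine pairwise disjoint pentagons can contain a straddling pentagon --- a statement about pentagon packings rather than individual pentagons --- and your proposal contains no argument for it. Note that you cannot instead deduce ``each pentagon lies in exactly one set'' from transitivity on pentagons together with the incidence count $126\cdot 10=1260$, because the value $126$ is precisely what you are trying to establish; the circle must be broken by an honest uniqueness proof for the ten-pentagon partition containing $H_0$ (or by computing the stabilizer of one such set inside $\mathrm{Aut}(HoSi)$).
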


In this section, we will show the following result.
\begin{thm}\label{HS}
The Hoffman-Singleton graph $HoSi$ with parameters $(50,7,0,1)$ and smallest eigenvalue $-3$ is not $2$-integrable.
\end{thm}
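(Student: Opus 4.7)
My plan is to assume, for contradiction, that $HoSi$ is $2$-integrable: there exists an integer matrix $N$ with $N^{T}N=2(A+3I)$. Since the multiplicity of $-3$ in $A$ is $21$, the matrix $N$ has rank $29$, hence at least $29$ rows; each column has squared norm $6$, so every entry of $N$ lies in $\{-2,-1,0,1,2\}$ and $N$ has at most $6\cdot 50=300$ nonzero entries. Proposition \ref{basicpro}(i) then produces a row $\mathbf{r}$ of $N$ with $|{\rm supp}(\mathbf{r})|\leq 10$, and I shall derive a contradiction from this single row.

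First I exploit Proposition \ref{basicpro}(ii) with the equitable partition $\pi(x_{0})=\{\{x_{0}\},N_{1}(x_{0}),N_{2}(x_{0})\}$. The quotient matrix has eigenvalues $7,2,-3$, and a direct computation gives the $(-3)$-eigenvector $(21,-9,1)^{T}$. This yields $21\mathbf{r}_{x_{0}}-9\delta_{x_{0}}^{\mathbf{r}}+\zeta_{x_{0}}^{\mathbf{r}}=0$ for every $x_{0}$, which combined with $\gamma_{x_{0}}^{\mathbf{r}}+\delta_{x_{0}}^{\mathbf{r}}+\zeta_{x_{0}}^{\mathbf{r}}=S$ (where $S=\sum_{y}\mathbf{r}_{y}$) reduces to
\[
\delta_{x_{0}}^{\mathbf{r}}=2\mathbf{r}_{x_{0}}+T,\qquad T:=S/10,
\]
so $10\mid S$, and since $|S|\leq 2|{\rm supp}(\mathbf{r})|\leq 20$, $T\in\{-2,-1,0,1,2\}$. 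A short computation gives $\sum_{x}\gamma_{x}^{\mathbf{r}}\delta_{x}^{\mathbf{r}}=2M+10T^{2}$, where $M=\sum_{x}\mathbf{r}_{x}^{2}$, so Proposition \ref{basicpro}(iii) becomes $M^{2}\leq 10M+20T^{2}$.

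Next I dispatch the cases on $T$. The case $T=\pm 2$ forces all ten support entries to be $\pm 2$, hence $M=40$, violating the inequality $1600\leq 480$. The sub-case $T=0$ with $|{\rm supp}(\mathbf{r})|=10$ and a balanced $5/5$ split of $\pm 1$ values gives $e(V^{+})-e(V^{-})=10$ from the $\delta$-equation, but any five vertices of $HoSi$ induce at most a pentagon ($\leq 5$ edges) by the girth-five condition, contradiction; the remaining $T=0$ sub-cases are eliminated by parity, an analogous edge count (in the purely $\pm 1$ regime), or, when some $|\mathbf{r}_{x}|=2$, by combining Proposition \ref{coclipro} with $R(3,4)=9$ (an induced subgraph of $HoSi$ on $\geq 9$ vertices is triangle-free, hence carries a coclique of size $\geq 4$, contradicting the bound $3$ from Proposition \ref{coclipro}). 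For $T=\pm 1$, the constraints $|S|=10$, $|{\rm supp}(\mathbf{r})|\leq 10$, and $M\leq 11$ leave only $|{\rm supp}(\mathbf{r})|=10$ with all $\mathbf{r}_{x}=\pm 1$, and the $\delta$-equation then says $V_{P}:={\rm supp}(\mathbf{r})$ induces a $3$-regular subgraph in which every outside vertex has exactly one neighbor; since a $3$-regular induced subgraph of $HoSi$ on $10$ vertices has girth $\geq 5$, by the Moore bound it must be a Petersen subgraph (compatible with Proposition \ref{StructureHS}), so $\mathbf{r}=\pm\mathbf{1}_{V_{P}}$.

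The main obstacle is ruling out this Petersen indicator. My plan is to find a second row of $N$ of the same form and then collide it with $\mathbf{r}$. Deleting $\mathbf{r}$ from $N$ produces $N'$ with $(N')^{T}N'=2(A+3I)-\mathbf{1}_{V_{P}}\mathbf{1}_{V_{P}}^{T}$. Reducing modulo $2$, every $V_{P}$-column of $N'\bmod 2$ has odd weight ($1$ or $5$, from the two decompositions $5=4+1$ and $5=1+1+1+1+1$ of the column's squared norm) and pairwise $\mathbb{F}_{2}$-inner product $1$ with the other $V_{P}$-columns. From this I extract a row index $i^{*}$ lying in the mod-$2$ support of every $V_{P}$-column: this is immediate when some $V_{P}$-column has a $\pm 2$ entry (weight-$1$ case forces all weight-$1$ columns to share a common $1$-position, and every weight-$5$ column must contain it), and the complementary all-$\pm 1$ sub-case is handled by the integer sign constraints on the $\rho\times 10$ sub-matrix $N'|_{V_{P}}$. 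The row $\tilde{\mathbf{r}}:=N_{i^{*},\cdot}$ then has $\pm 1$ entries on $V_{P}$ and even entries on $V\setminus V_{P}$; re-applying the inequality of the previous paragraph to $\tilde{\mathbf{r}}$ forces the $(V\setminus V_{P})$-entries to vanish, and re-running the case analysis forces $\tilde{\mathbf{r}}=\pm\mathbf{1}_{V_{P}}$. Finally, $\|N\mathbf{r}^{T}\|^{2}=\mathbf{r}(2A+6I)\mathbf{r}^{T}=120$ while $(N\mathbf{r}^{T})_{k}=M=10$, so $\sum_{j\neq k}((NN^{T})_{j,k})^{2}=20$ and every off-diagonal entry of $NN^{T}$ in column $k$ is bounded by $\sqrt{20}<5$; however $|(NN^{T})_{i^{*},k}|=|\tilde{\mathbf{r}}\cdot\mathbf{r}|=10$, the desired contradiction.
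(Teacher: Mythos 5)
Your local analysis of a single short row is set up correctly (the bound $|{\rm supp}(\mathbf{r})|\leq 10$, the relation $\delta_x^{\mathbf{r}}=2\mathbf{r}_x+T$ with $10\mid S$, and the inequality $M^2\leq 10M+20T^2$ all check out, and they match the paper's equations (\ref{hssupposrteq})--(\ref{hspossibleparameters})). But the proof breaks at the $T=0$ balanced sub-case. The correct consequence of $\delta_x^{\mathbf{r}}=2\mathbf{r}_x$ is $2e(V^+)-e(V^+,V^-)=10$ and $2e(V^-)-e(V^+,V^-)=10$, not $e(V^+)-e(V^-)=10$; combined with the girth-$5$ bound $e(V^\pm)\leq 5$ this forces $V^+$ and $V^-$ to be two pentagons with \emph{no} edges between them --- which is not a contradiction but a genuine possibility (in Robertson's construction, $P^0\cup P^1$ is exactly such a pair, and the inequality $M^2\leq 10M+20T^2$ is then tight). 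This is precisely the ``$\sigma^{\mathbf{r}}=0$'' case that the paper's proof must keep alive: the paper shows $N$ necessarily has $30$ rows, $20$ of double-pentagon type and $10$ of Petersen type, and the contradiction only emerges from the global interaction of all of them (the rank-$29$ linear dependence, the auxiliary graph $H\cong K_{5,5}$, and Propositions \ref{StructureHS} and \ref{AutHS}). Since your chosen short row could itself be of double-pentagon type, no single-row argument of the kind you propose can close the proof.

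The endgame for the Petersen case is also not salvageable as sketched. You need a row index $i^*$ lying in the mod-$2$ support of every $V_P$-column of $N'$; in the all-$\pm1$ sub-case (which is the one that actually arises: in the near-configuration each column of $N'$ restricted to $V_P$ has five $\pm1$ entries, one from a second Petersen row and four from double-pentagon rows) the mod-$2$ Gram data --- ten odd-weight vectors with pairwise $\mathbb{F}_2$-inner product $1$ --- does \emph{not} force a common coordinate, and in the configuration the paper eventually excludes there is in fact no such common row: the second Petersen rows through the ten vertices of $V_P$ are five distinct rows meeting $V_P$ in two vertices each, and no double-pentagon row can have support equal to $V_P$. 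So the phrase ``handled by the integer sign constraints'' is a placeholder for the entire global argument. The final collision $|\tilde{\mathbf{r}}\cdot\mathbf{r}|=10>\sqrt{20}$ is valid arithmetic, but the hypothesis that a second row equal to $\pm\mathbf{1}_{V_P}$ exists is never established and is false in the critical configuration.
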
 \begin{proof}
Suppose that $HoSi$ is $2$-integrable. Then there exists an integral matrix $N$ such that
$2A(HoSi)+6I=N^TN$ holds. Note that for any entry $N_{i,j}$ of $N$, $N_{i,j}\in\{\pm1,\pm2,0\}$. Without loss of generality, we may assume that for any row $\mathbf{r}$ of $N$, $|{\rm supp}(\mathbf{r})|>0$ and $\sigma^{\mathbf{r}}\geq0$.

For any $x\in V(HoSi)$, let $\pi(x):=\{\{x\},N_1(x),N_2(x)\}$ be an equitable partition of $HoSi$. From (\ref{quotientmatrix}), we find the quotient matrix $Q_{\pi(x)}$ of $HoSi$ relative to $\pi(x)$ is as follows:
$$Q_{\pi(x)}=\begin{pmatrix}
0 & 7 & 0 \\
1 & 0 & 6\\
0 & 1 & 6
\end{pmatrix}.$$
The vector $\mathbf{u}:=(21,-9,1)^T$ is an eigenvector of $Q_{\pi(x)}$ with eigenvalue $-3$. We have, by (\ref{stronglysupport}),
\begin{equation}\label{hssupposrteq}
21\gamma_x^\mathbf{r}-9\delta_x^\mathbf{r}+\zeta_x^\mathbf{r}=0
\end{equation}
holds, for any row $\mathbf{r}$ of $N$.

First, we claim that each entry of $N$ equals $1,~-1$ or $0$. Suppose not, then there exists a row $\mathbf{r}^\prime$ of $N$ and a vertex $x$ of $HoSi$ such that $|\mathbf{r}^\prime_x|=2$. It is straightforward to check that there exists no vertex $y$ satisfying $|\mathbf{r}^\prime_y|=2$ except the vertex $x$. From Proposition \ref{coclipro}, we obtain that the graph $HoSi(\mathbf{r}^\prime)$ has no $4$-coclique. Note that $HoSi(\mathbf{r}^\prime)$ also has no $3$-clique, as the graph $HoSi$ does not contain $3$-clique. Thus, we have
\begin{equation}\label{hs2supportbound}
|\gamma_x^{\mathbf{r}^\prime}|-1+|\delta_x^{\mathbf{r}^\prime}|+|\zeta_x^{\mathbf{r}^\prime}|\leq|{\rm supp}(\mathbf{r}^\prime)|<R(3,4)=9.
\end{equation}
But we can not find the possible solution for $(\gamma_x^{\mathbf{r}^\prime},\delta_x^{\mathbf{r}^\prime},\zeta_x^{\mathbf{r}^\prime})$ which satisfies both (\ref{hssupposrteq}) and (\ref{hs2supportbound}). This implies that the claim holds.

Now for any row $\mathbf{r}$ of $N$, we have that, from Proposition \ref{coclipro}, the graph $HoSi(\mathbf{r})$ has no $7$-coclique. Note that $HoSi(\mathbf{r})$ also has no $3$-clique, as the graph $HoSi$ does not contain $3$-clique. Thus, we obtain
\begin{equation}\label{hssupportsize}
|\gamma_x^\mathbf{r}|+|\delta_x^\mathbf{r}|+|\zeta_x^\mathbf{r}|\leq|{\rm supp}(\mathbf{r})|<R(3,7)=23,
\end{equation}
\begin{equation}\label{hssupportzeta}
|\zeta_x^\mathbf{r}|<R(3,6)=18.
\end{equation}
Thus, we have
\begin{equation}\label{hspossibleparameters}
(\gamma_x^\mathbf{r},\delta_x^\mathbf{r},\zeta_x^\mathbf{r},\sigma^\mathbf{r})\in\{(-1,-1,12,10),(1,3,6,10),(1,4,15,20),(\pm1,\pm2,\mp3,0)\}
\end{equation}
for any $x$ with $\mathbf{r}_x\neq0$.

First we show $\sigma^\mathbf{r}\neq20$. Otherwise, for any $x$ with $\mathbf{r}_x\neq0$, we have $\gamma_x^\mathbf{r}=\mathbf{r}_x=1$ and $\delta_x^\mathbf{r}=4$. From (\ref{normofr}), we also have $4|{\rm supp}(\mathbf{r})|\geq\frac{|{\rm supp}(\mathbf{r})|^2}{2}-3|{\rm supp}(\mathbf{r})|$, that is, $|{\rm supp}(\mathbf{r})|\leq14$. But $|{\rm supp}(\mathbf{r})|\geq1+4+15=20$. This gives a contradiction.

If $\sigma^\mathbf{r}=10$, then $\delta_x^\mathbf{r}=3$ when $\gamma_x^\mathbf{r}=\mathbf{r}_x=1$ and $\delta_x^\mathbf{r}=-1$ when $\gamma_x^\mathbf{r}=\mathbf{r}_x=-1$. By using (\ref{normofr}), we have $3\cdot\frac{|{\rm supp}(\mathbf{r})|+10}{2}+\frac{|{\rm supp(\mathbf{r}})|-10}{2}\geq\frac{|{\rm supp}(\mathbf{r})|^2}{2}-3|{\rm supp}(\mathbf{r})|$ and thus $|{\rm supp}(\mathbf{r})|\leq10$. This implies that $|{\rm supp}(\mathbf{r})|=10$ and for any $x$ with $\mathbf{r}_x\neq0$, $(\gamma_x^\mathbf{r},\delta_x^\mathbf{r},\zeta_x^\mathbf{r})=(1,3,6)$, that is, the induced subgraph $HoSi(\mathbf{r})$ is the Petersen graph.

If $\sigma^\mathbf{r}=0$, then $\delta_x^\mathbf{r}=2$ when $\gamma_x^\mathbf{r}=\mathbf{r}_x=1$ and $\delta_x^\mathbf{r}=-2$ when $\gamma_x^\mathbf{r}=\mathbf{r}_x=-1$, that is, for any vertex $x$ with $\mathbf{r}_x\neq0$, there exist at least two vertices $y_1$ and $y_2$ such that $y_i\sim x$ and $\mathbf{r}_{y_i}=\mathbf{r}_{x}$. Considering that $HoSi$ has no $3$- and $4$-cycle, we find that $|{\rm supp}(\mathbf{r})|=|V(HoSi(\mathbf{r}))|\geq10$. Note that in this case, we also have
$$|{\rm supp}(\mathbf{r})|^2=(\mathbf{r}\mathbf{r}^T)^2\leq \mathbf{r}N^TN\mathbf{r}^T=2\mathbf{r}A(HoSi)\mathbf{r}^T+6\mathbf{r}\mathbf{r}^T=2\cdot2\cdot|{\rm supp}(\mathbf{r})|+6|{\rm supp}(\mathbf{r})|.$$
Thus $|{\rm supp}(\mathbf{r})|=10$ and the induced subgraph $HoSi(\mathbf{r})$ is a disjoint union of $2$ pentagons. Moreover, the row $\mathbf{r}$ is orthogonal to all of the other rows of $N$ except itself.

Now let us look at the matrix $N$. It is easy to see that $N$ has exactly $30$ rows, since $N$ has $300$ $(=6\times 50)$ non-zero entries and each row $\mathbf{r}$ of $N$ has $10$ non-zero entries. Denote by $\mathbf{r}^0,\ldots,\mathbf{r}^{29}$ all of the rows of $N$. For convenience, we may assume $\sigma^{\mathbf{r}^i}=10$ for $0\leq i\leq s-1$ and $\sigma^{\mathbf{r}^i}=0$ for $s\leq i\leq 29$. Considering that ${\rm rank}(N)={\rm rank}(A(HoSi)+3I)=50-21=29$ and $\mathbf{r}^j(\mathbf{r}^i)^T=0$ for $s\leq j\leq 29$ and $j\neq i$, there exist constants $f_1,\ldots,f_{s}$ such that the following holds.

\begin{equation}\label{lineardependent}
\sum_{i=0}^{s-1}f_i\mathbf{r}^i=\mathbf{0}, \text{ where } \sum_{i=0}^{s-1}f_i^2\neq0
\end{equation}

Let $H$ be the graph with vertex set $\{\mathbf{r}^0,\ldots,\mathbf{r}^{s-1}\}$, where $\mathbf{r}^i$ and $\mathbf{r}^j$ are adjacent if and only if there exists a vertex $y\in V(G)$ such that $\mathbf{r}^i_y=\mathbf{r}^j_y=1$. For any vertex $x$, assume $l_0(x):=|\{\mathbf{r}^i\mid \mathbf{r}^i_x\neq0, 0\leq i\leq s-1\}|$ and $l_1(x):=|\{\mathbf{r}^i\mid \mathbf{r}^i_x\neq0, s\leq i\leq 29\}|$. We have
\begin{equation*}
\begin{split}
&\sum_{0\leq i\leq 29,\mathbf{r}^i_x\neq0}|\gamma_x^{\mathbf{r}^i}|=l_0(x)+l_1(x)=6,\\
&\sum_{0\leq i\leq 29,\mathbf{r}^i_x\neq0}|\delta_x^{\mathbf{r}^i}|=3l_0(x)+2l_1(x)=2\times 7=14.
\end{split}
\end{equation*}
This shows
\begin{equation}\label{vertexinpetersen}
l_0(x)=2.
\end{equation}
By using (\ref{vertexinpetersen}), we obtain
$$|\{(x,\mathbf{r}^i)\mid \mathbf{r}^i_x\neq0,0\leq i\leq s-1\}|=2\cdot 50=10s.$$
Thus $|V(H)|=s=10$.

Now we figure out the minimal valency of $H$. For any vertex $\mathbf{r}^i$ of $H$, let $\pi:=\{V(HoSi(\mathbf{r}^i)),\\V(HoSi)-V(HoSi(\mathbf{r}^i))\}$ be a partition of the vertices of $HoSi$. The quotient matrix $Q_{\pi}$ of $HoSi$ relative to $\pi$ is $$Q_{\pi}=\left(\begin{array}{cc}3 & 4 \\1 & 6 \\\end{array}\right)$$ with $7$ and $2$ as eigenvalues. Note that $\pi$ is equitable by Lemma \ref{quotienteigenvalues1} and the vector $\mathbf{w}:=(4,-1)^T$ is an eigenvector of $Q_{\pi}$ with $2$ as eigenvalue. Define $\widetilde{\mathbf{w}}$ to be a vector satisfying $\widetilde{\mathbf{w}}_x=4$ if $x\in V(HoSi(\mathbf{r}^i))$ and $\widetilde{\mathbf{w}}_x=-1$ if $x\in V(HoSi)-V(HoSi(\mathbf{r}^i))$. Then $A(HoSi)\widetilde{\mathbf{w}}=2\widetilde{\mathbf{w}}$ and $\widetilde{\mathbf{w}}^T(2A(HoSi)+6I)\widetilde{\mathbf{w}}=10\widetilde{\mathbf{w}}^T\widetilde{\mathbf{w}}=2000$.
Note that for any $\mathbf{r}^j\in V(H)$, $\mathbf{r}^j$ is a $01$-vector. Hence, if $\mathbf{r}^i(\mathbf{r}^j)^T=t$, then $(\mathbf{r}^j\widetilde{\mathbf{w}},\mathbf{r}^j\widetilde{\mathbf{w}})=(5t-10)^2$. Define $p^i_t:=|\{j\mid j\neq i,\mathbf{r}^j\in V(H),\mathbf{r}^i(\mathbf{r}^j)^T=t\}|$, where $0\leq t\leq 10$. Thus,
\begin{equation}\label{intersectionnumber}
\sum_{t=1}^{10}p^i_t=9,~\sum_{t=1}^{10}p^i_t(5t-10)^2\leq\widetilde{\mathbf{w}}^TN^TN\widetilde{\mathbf{w}}-(\mathbf{r}^i\widetilde{\mathbf{w}},\mathbf{r}^i\widetilde{\mathbf{w}})=\widetilde{\mathbf{w}}^T(2A(HoSi)+6I)\widetilde{\mathbf{w}}-1600=400.
\end{equation}
From (\ref{intersectionnumber}), we find $p^i_0\leq4$. This shows that for any vertex $\mathbf{r}^i$ of $H$, it has valency at least $5$.

Since $\mathbf{r}^i$ is a $01$-vector for any $0\leq i\leq s-1$, we infer that, from (\ref{lineardependent}) and (\ref{vertexinpetersen}), any two distinct vertices $\mathbf{r}^i$ and $\mathbf{r}^j$ of $H$ are adjacent if and only if $f_i=-f_j$. As the minimal valency of $H$ is at least $5$ and  $\sum_{i=0}^{s-1}f_i^2\neq0$, we conclude that $H$ is the bipartite graph $K_{5,5}$ and thus
 \begin{equation}\label{intersectionnumber1}
p^i_0=4,~p^i_2=5,\text{ and }p^i_t=0 \text{ for }t\neq0,2.
\end{equation}

 Assume $\{\mathbf{r}_0,\ldots,\mathbf{r}_4\}$ and $\{\mathbf{r}_5,\ldots\mathbf{r}_9\}$ are the color classes of $H$. The induced subgraphs $HoSi(\mathbf{r}^0)$, $\ldots$, $HoSi(\mathbf{r}^9)$ have the following properties.
\begin{enumerate}
 \item $HoSi(\mathbf{r}^i)$ is a Petersen graph for any $0\leq i\leq 9$;
 \item $|V(HoSi(\mathbf{r}^i))\cap V(HoSi(\mathbf{r}^j))|=0$ for $0\leq i<j\leq4$;
 \item $|V(HoSi(\mathbf{r}^{i}))\cap V(HoSi(\mathbf{r}^{j}))|=0$ for $5\leq i<j\leq9$;
 \item $|V(HoSi(\mathbf{r}^i))\cap V(HoSi(\mathbf{r}^j))|=2$ for $0\leq i\leq 4$ and $5\leq j\leq 9$.
 \end{enumerate}

Without loss of generality, we may assume, by Proposition \ref{AutHS} and Proposition \ref{StructureHS},  $V(HoSi(\mathbf{r}^i))=V(P^i)\cup V(Q^i)$ for $i=0,\ldots,4$, where $P^i$ and $Q^i$ are the graphs in Figure \ref{figure}. But now we can not find the Petersen graph $HoSi(\mathbf{r}^5)$ satisfying $|V(HoSi(\mathbf{r}^5))\cap V(HoSi(\mathbf{r}^i))|=2$ for $0\leq i\leq 4$.

This shows that the graph $HoSi$ is not $2$-integrable and we complete the proof.
\end{proof}

\section{The complement of $GQ(3,9)$}

A \emph{generalized quadrangle} of order $(s, t)$ is an incidence structure of points and lines with the
properties that
\begin{enumerate}
\item every point lies on $t+1$ lines and any two points are on at most one line;
\item every line contains $s+1$ points;
\item for any point $P$ and line $L$ which are not incident, there is a unique point on $L$ collinear with $P$.
\end{enumerate}
\indent The \emph{point graph} of a generalized quadrangle, also denoted by $GQ(s, t)$, is the graph with the points of the quadrangle as its vertices, with two points adjacent if and only if they are collinear. Note that the graph $GQ(s,t)$ is a strongly regular graph with parameters
\begin{equation}\label{gqparameter}
((st+1)(s+1),s(t+1),s-1,t+1)
\end{equation} (see \cite[Lemma 10.8.1]{Ch&G}).

It is well-known that the strongly regular graph $GQ(q, q^{2})$ exists when $q$ is a prime power, and is unique for $q = 2, 3$ (see \cite{finitegeneralizedquadrangles}). The complement of $GQ(q, q^{2})$, denoted by $\overline{GQ}(q,q^{2})$, is a strongly regular graph with parameters, by (\ref{compleparameter}) and (\ref{gqparameter}), as follows:
$$((q + 1)(q^{3} + 1), q^{4}, q(q - 1)(q^{2} + 1), (q - 1)q^{3}).$$
In particular $GQ(3,9)$ is the unique strongly regular graph with parameters $(112,30,2,10)$ and spectrum $\{30^{(1)},2^{(90)},-10^{(21)}\}$, and its first subconstituent is a disjoint union of $10$ $3$-cliques and second subconstituent is the unique strongly regular graph with parameters $(81,20,1,6)$, called the Brouwer-Haemers graph (see \cite{brouwer&heamers}). Therefore, we have
\begin{pro}\label{gq39subgraph}
\begin{enumerate}
\item The graph $\overline{GQ}(3,9)$ is the unique strongly regular graph with parameters $(112,81,60,54)$ and spectrum $\{81^{(1)},9^{(21)},-3^{(90)}\}$.
\item The first subconstituent of $\overline{GQ}(3,9)$ is the strongly regular graph with parameters $(81,60,45,42)$, which is the complement of the Brouwer-Haemers graph.
\item The maximal order of cocliques in $\overline{GQ}(3,9)$ is $4$ and the maximal order of cliques in $\overline{GQ}(3,9)$ is $16$.
\end{enumerate}
\end{pro}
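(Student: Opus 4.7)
The plan is to reduce each of the three claims to standard facts about $GQ(3,9)$ together with elementary manipulations that swap a strongly regular graph for its complement, and then for part (iii) to invoke the geometry of a generalized quadrangle and a known bound on partial ovoids.

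For part (i), I would first apply the complement formula~(\ref{compleparameter}) to the parameter tuple $(112,30,2,10)$ of $GQ(3,9)$; the arithmetic yields $(v,k,\lambda,\mu)=(112,81,60,54)$. Uniqueness of $\overline{GQ}(3,9)$ then follows by taking complements of both sides of the fact (cited in the paper, \cite{finitegeneralizedquadrangles}) that $GQ(3,9)$ is the unique strongly regular graph with its parameters. For the spectrum, I would use the standard rule that complementing a connected strongly regular graph with spectrum $\{k^{(1)},r^{(f)},s^{(g)}\}$ produces the spectrum $\{(v-1-k)^{(1)},(-1-s)^{(f)},(-1-r)^{(g)}\}$; starting from $\{30^{(1)},2^{(90)},(-10)^{(21)}\}$ this delivers $\{81^{(1)},9^{(21)},(-3)^{(90)}\}$.

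For part (ii), fix $x\in V(\overline{GQ}(3,9))$. The key observation is that the first subconstituent of $\overline{GQ}(3,9)$ at $x$ is induced on exactly the second neighbourhood of $x$ in $GQ(3,9)$, and the induced adjacency on this set is the complement of the one in $GQ(3,9)$. Thus this graph is the complement of the second subconstituent of $GQ(3,9)$, which by \cite{brouwer&heamers} is the Brouwer--Haemers graph with parameters $(81,20,1,6)$. Applying (\ref{compleparameter}) once more produces the advertised parameters $(81,60,45,42)$.

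For part (iii), the first half is easy: a coclique in $\overline{GQ}(3,9)$ is a clique in $GQ(3,9)$, and the generalized quadrangle axioms force every clique of $GQ(s,t)$ to be contained in a line, so the maximum size is $s+1=4$. The second half --- showing that the maximum clique in $\overline{GQ}(3,9)$, i.e.\ the maximum coclique (partial ovoid) in $GQ(3,9)=Q^{-}(5,3)$, is exactly $16$ --- is the main obstacle, since the Hoffman (Delsarte) bound only yields $28$. I would invoke the known non-existence of ovoids in $Q^{-}(5,3)$ together with the sharp upper bound on partial ovoids of $Q^{-}(5,q)$ for $q=3$ established in the finite-geometry literature, and exhibit a $16$-coclique (for instance from a hyperplane section or the standard construction in $Q^{-}(5,3)$) to meet the bound. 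If this citation route is judged unsatisfactory, the value $16$ can alternatively be confirmed by a short computer check on the unique strongly regular graph in question.
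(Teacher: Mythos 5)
Your proposal is correct and takes essentially the same route as the paper, which offers no explicit proof but derives all three claims by complementation from the cited facts that $GQ(3,9)$ is the unique strongly regular graph with parameters $(112,30,2,10)$, that its first subconstituent is a disjoint union of ten triangles (whence the coclique bound $4$), and that its second subconstituent is the Brouwer--Haemers graph. The clique number $16$ is simply asserted in the paper, so your explicit appeal to the sharp partial-ovoid bound for $Q^{-}(5,3)$ (or a computer check) is, if anything, more careful than the source.
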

In this section, we will show the following result.
\begin{thm}\label{gq39}
The strongly regular graph $\overline{GQ}(3,9)$, which is the complement of the point graph of the generalized quadrangle of order $(3,9)$, with parameters $(112,81,60,54)$ and smallest eigenvalue $-3$ is not $2$-integrable.
\end{thm}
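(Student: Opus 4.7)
The plan is to assume for contradiction that $2A+6I=N^TN$ for some integral matrix $N$, where $A=A(\overline{GQ}(3,9))$, and exhibit a short row of $N$ that leads to a contradiction. The spectrum $\{81^{(1)},9^{(21)},(-3)^{(90)}\}$ gives ${\rm rank}(N)=22$, so every column has squared norm $6$ (hence entries in $\{0,\pm 1,\pm 2\}$), and Proposition \ref{basicpro}{\rm (i)} produces a row $\mathbf{r}$ with $|{\rm supp}(\mathbf{r})|\leq \lfloor 6\cdot 112/22\rfloor = 30$. The key linear constraint comes from Proposition \ref{basicpro}{\rm (ii)}: the quotient matrix $Q_{\pi(x)}$ of (\ref{quotientmatrix}) has $(135,-5,9)^T$ as an eigenvector for $-3$, so $135\gamma_x^{\mathbf{r}}-5\delta_x^{\mathbf{r}}+9\zeta_x^{\mathbf{r}}=0$ for every $x$. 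Combining with $\gamma_x^{\mathbf{r}}+\delta_x^{\mathbf{r}}+\zeta_x^{\mathbf{r}}=\sigma^{\mathbf{r}}$ (constant in $x$) forces $\sigma^{\mathbf{r}}=14t$ for an integer $t$, and
\[\delta_x^{\mathbf{r}}=9\mathbf{r}_x+9t,\qquad \zeta_x^{\mathbf{r}}=5t-10\mathbf{r}_x\qquad(\forall x\in V).\]
Negating $\mathbf{r}$ if necessary, I may assume $t\geq 0$; the bound $|\sigma^{\mathbf{r}}|\leq 2|{\rm supp}(\mathbf{r})|\leq 60$ restricts $t$ to $\{0,1,2,3,4\}$.

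The argument then splits according to whether $\mathbf{r}$ contains a $\pm 2$ entry. For rows with such an entry, analyzing $(N_x,N_y)\in\{0,2\}$ when $|\mathbf{r}_x|=|\mathbf{r}_y|=2$ forces all such entries to share a common sign (WLOG $+2$) and to span a clique of size $k$ in $\overline{GQ}(3,9)$; Proposition \ref{gq39subgraph}{\rm (iii)} then gives $k\leq 16$, and combining the $\delta$-constraint $\delta_x^{\mathbf{r}}=18+9t$ (which, after subtracting the $2(k-1)$ contribution of neighboring $+2$-entries, constrains the $\pm 1$-neighborhood) with $|{\rm supp}(\mathbf{r})|\leq 30$ and the quadratic inequality (\ref{normofr}) rules out every $t\in\{0,\ldots,4\}$. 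For all-$\pm 1$ rows, set $S^{\pm}=\{x:\mathbf{r}_x=\pm 1\}$, $p=|S^+|$, $n=|S^-|$; then $p-n=14t$ and $p+n\leq 30$. When $t\geq 2$, the relation $q_x^+-q_x^-=9(1+t)$ for $x\in S^+$ (where $q_x^{\pm}=|N(x)\cap S^{\pm}|$) forces $p\geq 28$, hence $n\leq 1$, and unpacking the two possibilities produces a $28$- or a $19$-clique in $\overline{GQ}(3,9)$, both violating the clique bound $16$. For $t=1$, $p\geq 19$, so either $p=19$ gives a $19$-clique or $p\in\{20,21,22\}$ reduces to the counting argument below; for $t=0$, $p=n$, with $p\geq 10$ from the $\delta$-constraint and $p\leq 12$ from (\ref{normofr}) (after substituting $\delta_x^{\mathbf{r}}=9\mathbf{r}_x$ to obtain $B \leq 24$), again leading to the counting argument for each $p\in\{10,11,12\}$.

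The counting argument, which I expect to be the main quantitative step, uses the strongly-regular identity $A^2=27I+6A+54J$ to expand
\[\sum_{v\in V}|N_G(v)\cap S^+|^2=\mathbf{1}_{S^+}^TA^2\mathbf{1}_{S^+}=27p+12|E(S^+)|+54p^2,\]
where the $\delta$-relations give $2|E(S^+)|=e^H+9p(1+t)$ with $e^H=|E(S^+,S^-)|$. The contributions of $v\in S^+$ and $v\in S^-$ to the left-hand side are then determined (up to the few admissible choices of the distributions of $q_v^{\pm}$) by the $\delta$-constraint $q_v^+-q_v^-\in\{9(1+t),9(t-1)\}$, leaving an explicit expression for $\sum_{v\in V\setminus S}a_v^2$ with $a_v=|N_G(v)\cap S^+|$ that should fall below the Cauchy--Schwarz lower bound $(\sum_{v\in V\setminus S}a_v)^2/(112-p-n)$ (using $\sum a_v=81p-2|E(S^+)|-e^H$). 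As a sanity check, the case $(p,n,t,e^H)=(10,10,0,0)$ yields $\sum a_v^2=5400$ versus $720^2/92\approx 5634.8$, a strict violation. The main obstacle will be carrying out the optimization over the narrow range of admissible $q_v^{\pm}$-distributions to confirm strict violation uniformly in the remaining subcases; no structural input from $\overline{GQ}(3,9)$ beyond its SRG parameters and the clique bound $16$ should be needed.
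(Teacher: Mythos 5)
Your proposal is correct in outline and reaches the contradiction by a genuinely different route than the paper. Both arguments share the same skeleton: the rank count giving a row $\mathbf{r}$ with $|{\rm supp}(\mathbf{r})|\leq 30$ via Proposition \ref{basicpro}(i), and the local linear constraint $135\gamma_x^{\mathbf{r}}-5\delta_x^{\mathbf{r}}+9\zeta_x^{\mathbf{r}}=0$ from the $(135,-5,9)^T$ eigenvector of $Q_{\pi(x)}$. From there the paper sharpens the divisibility to $28\mid\sigma^{\mathbf{r}}$ by applying Proposition \ref{basicpro}(ii) to a $4$-coclique, and its decisive lemma is \emph{local}: it restricts $\mathbf{r}$ to the first subconstituent, which is itself an SRG$(81,60,45,42)$ with smallest eigenvalue $-3$, applies the same eigenvector trick there to show that any vertex with $\delta_y^{\mathbf{r}}=\pm 9$ has at least $17$ nonzero neighbours, and thereby forces $|{\rm supp}(\mathbf{r})|\geq 31$ in every surviving case --- one clean contradiction with no optimization. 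You instead work only with $14\mid\sigma^{\mathbf{r}}$ (so you must also dispose of $\sigma^{\mathbf{r}}=14,42,56$, which the paper never sees), and replace the subconstituent lemma by the clique bound $16$ from Proposition \ref{gq39subgraph}(iii) together with a variance count via $A^2=27I+6A+54J$ and Cauchy--Schwarz. This buys you independence from the structure of the Brouwer--Haemers graph (only the SRG parameters and the clique number are used), at the cost of a genuine case-by-case optimization. Two cautions. First, I checked your deferred subcases and they do all close, but several only barely: for $t=1$, $(p,n)=(21,7)$ at $e^H=42$ and $(p,n)=(22,8)$ at $e^H=56$ the Cauchy--Schwarz violation has margin of order $30$ out of roughly $2\times 10^4$, so "should fall below" is not a formality and the extremal $e^H$ really must be examined (convexity in $e^H$ reduces each case to its endpoints); likewise your $t=2$, $(29,1)$ case is not killed by the counting argument at all --- it survives Cauchy--Schwarz --- and is only killed by the $19$-clique you extract from $K_{29}$ minus a matching, so that clique extraction must be made explicit. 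Second, your two normalizations conflict: once you negate $\mathbf{r}$ to force $t\geq 0$ you may no longer assume the $\pm2$-entries are $+2$, so the $\pm2$-row analysis must treat $\delta_x^{\mathbf{r}}=\pm 18+9t$ in both signs (the $-2$ branch is in fact easy, as $\zeta_x^{\mathbf{r}}=5t+20$ then exceeds what $p\leq 30$ allows for $t\geq 1$, and $t=0$ is sign-symmetric).
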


\begin{proof}
Suppose that the graph $\overline{GQ}(3,9)$ is $2$-integrable, then there exists an integral matrix $N$ such that $2A(\overline{GQ}(3,9))+6I=N^{T}N$ holds. Note that for any entry $N_{i,j}$, $N_{i,j}\in\{\pm1,\pm2,0\}$. Without loss of generality, we may assume that for any row $\mathbf{r}$ of $N$, $|{\rm supp}(\mathbf{r})|>0$ and $\sigma^{\mathbf{r}}\geq0$. Choose $\mathbf{r}^\prime$ to be a row of $N$ whose support has the minimum size.

From Proposition \ref{gq39subgraph} \rm{(iii)}, we find that there exists a $4$-coclique $\overline{C}$ in $\overline{GQ}(3,9)$. Let $\pi=\{V(\overline{C}),V(G)-V(\overline{C})\}$ be a partition of $\overline{GQ}(3,9)$. The quotient matrix $Q_{\pi}$ of $\overline{GQ}(3,9)$ relative to $\pi$ is $\left(\begin{array}{cc}0 & 81\\3 & 78 \\\end{array}\right)$ with $81$ and $-3$ as eigenvalues, where $(27,-1)^T$ is an eigenvector of $Q_{\pi}$ with eigenvalue $-3$. Thus, $\pi$ is an equitable partition by Lemma \ref{quotienteigenvalues1} and $27\sum_{x\in V(\overline{C})}\mathbf{r}^\prime_x-\sum_{x\in V(\overline{GQ}(3,9))-V(\overline{C})}\mathbf{r}^\prime_x=0$ holds by Propostition \ref{basicpro} \rm{(ii)}. This implies
\begin{equation}\label{gqcoclique}
28\mid\sigma^{\mathbf{r}^\prime}, \text{ where }\sigma^{\mathbf{r}^\prime}=\sum_{x\in V(\overline{GQ}(3,9))}\mathbf{r}^\prime_x.
\end{equation}

For any $x\in V(\overline{GQ}(3,9))$, the quotient matrix $Q_{\pi(x)}$, by (\ref{quotientmatrix}), of $\overline{GQ}(3,9)$ relative to the equitable partition $\pi(x):=\{\{x\},N_1(x),N_2(x)\}$ is as follows:
$$Q_{\pi(x)}=\begin{pmatrix}
0 & 81 & 0 \\
1 & 60 & 20\\
0 & 54 & 27
\end{pmatrix},$$
and the vector $\mathbf{u}:=(135,-5,9)^T$ is an eigenvector of $Q_{\pi(x)}$ with eigenvalue $-3$. We have, by (\ref{stronglysupport}),
\begin{equation}\label{gqpartition}
135\gamma_x^{\mathbf{r}^\prime}-5\delta_x^{\mathbf{r}^\prime}+9\zeta_x^{\mathbf{r}^\prime}=0.
\end{equation}

If there exists a vertex $x$ such that $\mathbf{r}^\prime_x=\gamma_x^{\mathbf{r}^\prime}=\pm2$, then for any vertex $y\neq x$, $\mathbf{r}^\prime_y\in\{\pm1,0\}$. In this case, \begin{equation}\label{gqfirstboud}
|{\rm supp}(\mathbf{r}^\prime)|\geq|\gamma_x^{\mathbf{r}^\prime}|-1+|\delta_x^{\mathbf{r}^\prime}|+|\zeta_x^{\mathbf{r}^\prime}|\geq31
\end{equation}
by (\ref{gqcoclique}) and (\ref{gqpartition}).

Now we may assume $\mathbf{r}^\prime_x\in\{\pm1,0\}$ for all $x\in V(\overline{GQ}(3,9))$. We will show that in this case $|{\rm supp}(\mathbf{r}^\prime)|\geq31$ also holds. By using (\ref{gqcoclique}) and (\ref{gqpartition}) again, only the following cases should be discussed.
\begin{equation}\label{gqpossibleparameters}
(\gamma_y^{\mathbf{r}^\prime},\delta_y^{\mathbf{r}^\prime},\zeta_y^{\mathbf{r}^\prime},\sigma^{\mathbf{r}^\prime})\in\{(1,27,0,28),(-1,9,20,28),(\pm1,\pm9,\mp10,0)\}
\end{equation}
with $\mathbf{r}^\prime_y\neq0$.

We claim that if $\delta_y^{\mathbf{r}^\prime}=\pm9$, there exist $17$ vertices $w_1,\ldots,w_{17}$ such that $w_i\sim y$ and $\mathbf{r}^\prime_{w_i}\neq0$. Define $\mathbf{r}^\prime(y)$ be the vector obtained from $\mathbf{r}^\prime$ be removing the coordinates indexed by $V(\overline{GQ}(3,9))-\{y\}-N_2(y)$. It is sufficient to show
that $|{\rm supp}(\mathbf{r}^\prime(y))|\geq17$. Now let us look at the first subconstituent $G_y$ of $\overline{GQ}(3,9)$, that is, the subgraph induced by the vertex set $N_1(y)$. From  Proposition \ref{gq39subgraph} {\rm (ii)}, we find that $G_y$ is a strongly regular graph with parameters $(81,60,45,42)$ and smallest eigenvalue $-3$. The quotient matrix $Q_{\pi^\prime(z)}$ of $G_y$ relative to the partition $\pi^\prime(z)=\{\{z\},N_1(z),N_2(z)\}$ of the vertices of $G_y$ is as follows:
$$Q_{\pi^\prime(z)}=\begin{pmatrix}
0 & 60 & 0 \\
1 & 45 & 14\\
0 & 42 & 18
\end{pmatrix},$$
and the vector $(20,-1,2)^T$ is an eigenvector of $Q_{\pi^\prime(z)}$ with eigenvalue $-3$. It is not hard to check that $20\gamma_z^{\mathbf{r}^\prime(y)}-\delta_z^{\mathbf{r}^\prime(y)}+2\zeta_z^{\mathbf{r}^\prime(y)}=0$ holds. If $\delta_y^{\mathbf{r}^\prime}=\sigma^{\mathbf{r}^\prime(y)}=\pm9$, we find $(\gamma_z^{\mathbf{r}^\prime(y)},\delta_z^{\mathbf{r}^\prime(y)},\zeta_z^{\mathbf{r}^\prime(y)})=(\pm1,\pm12,\mp4)$ when $\mathbf{r}^\prime(y)_z\neq0$ and thus $|{\rm supp}(\mathbf{r}^\prime(y))|\geq17$.

Now we claim that $\sigma^{\mathbf{r}^\prime}\neq0$. Otherwise $(\gamma_y^{\mathbf{r}^\prime},\delta_y^{\mathbf{r}^\prime},\zeta_y^{\mathbf{r}^\prime})=(\pm1,\pm9,\mp10)$ for any $\mathbf{r}^\prime_y\neq0$ and $|{\rm supp}(\mathbf{r}^\prime)|\geq 1+|{\rm supp}(\mathbf{r}^\prime(y))|+10\geq1+17+10=28$. We also have, by (\ref{normofr}), $9|{\rm supp}(\mathbf{r}^\prime)|\geq\frac{1}{2}|{\rm supp}(\mathbf{r}^\prime)|^2-3|{\rm supp}(\mathbf{r}^\prime)|$ and this is not possible.

If $\sigma^{\mathbf{r}^\prime}=28$, there must be a vertex $z^-$ such that
$(\gamma_{z^-}^{\mathbf{r}^\prime},\delta_{z^-}^{\mathbf{r}^\prime},\zeta_{z^-}^{\mathbf{r}^\prime})=(-1,9,20)$ and thus $|{\rm supp}(\mathbf{r}^\prime)|\geq 1+17+20\geq31$, otherwise the induced subgraph $\overline{GQ}(3,9)(\mathbf{r}^\prime)$ is a $28$-clique and this contradicts Proposition \ref{gq39subgraph} \rm{(iii)}.

But we have, by Proposition \ref{basicpro} \rm{(i)}, $|{\rm supp}(\mathbf{r}^\prime)|\leq\frac{2\cdot3\cdot112}{{\rm rank}(N)}=\frac{672}{{\rm rank}(A(\overline{GQ}(3,9))+3I)}=\frac{672}{112-90}<31.$ This shows that the graph $\overline{GQ}(3,9)$ is not $2$-integrable and the theorem holds.
\end{proof}

\section{The complement of the McLaughlin graph}
In 1975, J.M. Goethals and J.J. Seidel \cite{mc} showed that the McLaughlin graph is the unique strongly regular graph with parameters $(275,112,30,56)$ and spectrum $\{112^{(1)},2^{(252)},-28^{(22)}\}$. For its first and second subconstituents, they are the unique strongly regular graphs with parameters $(112,30,2,10)$ and $(162,56,10,24)$, respectively (see \cite{PJcemo} and \cite{van lint&brouwer}). Therefore we have
\begin{pro}\label{mcinformation}
\begin{enumerate}
\item The complement of the McLaughlin graph is the unique strongly regular graph with parameters $(275,162,105,81)$ and spectrum $\{162^{(1)},27^{(22)},-3^{(252)}\}$.
\item The second subconstituent of the complement of the McLaughlin graph is the strongly regular graph $\overline{GQ}(3,9)$.
\end{enumerate}
\end{pro}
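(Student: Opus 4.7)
The plan is to deduce both items directly from the cited characterizations of the McLaughlin graph $McL$ together with its first subconstituent, using only standard facts about graph complementation. Since everything is essentially bookkeeping on top of the Goethals--Seidel results, I do not expect any real obstacle; the only step requiring care is the identification of the second subconstituent of $\overline{McL}$ with the complement of the first subconstituent of $McL$.

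For item (i), I would first apply formula (\ref{compleparameter}) to the parameters $(v,k,\lambda,\mu)=(275,112,30,56)$ of $McL$, which yields $(275,162,105,81)$ for $\overline{McL}$ immediately. To obtain the spectrum, I would use the well-known fact that if a connected regular graph $G$ has eigenvalue $\theta\neq k$ with multiplicity $m$, then $-1-\theta$ is an eigenvalue of $\overline{G}$ with the same eigenvector, hence the same multiplicity. Applied to the non-principal eigenvalues $2^{(252)}$ and $(-28)^{(22)}$ of $McL$, this gives the eigenvalues $(-3)^{(252)}$ and $27^{(22)}$ of $\overline{McL}$, together with the principal eigenvalue $275-1-112=162$. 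Uniqueness of $\overline{McL}$ as a strongly regular graph with parameters $(275,162,105,81)$ is then inherited from Goethals--Seidel's uniqueness of $McL$, since complementation is a bijection on isomorphism classes of strongly regular graphs.

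For item (ii), the key observation is that for any subset $V'\subseteq V(G)$, the subgraph of $\overline{G}$ induced on $V'$ is exactly the complement of the subgraph of $G$ induced on $V'$. Fixing a vertex $x\in V(\overline{McL})$, the second subconstituent of $\overline{McL}$ at $x$ is induced on $\{y\neq x: y\not\sim_{\overline{McL}} x\}=\{y\neq x: y\sim_{McL} x\}$, i.e.\ on the first subconstituent of $McL$ at $x$. By the cited result of Cameron--Goethals--Seidel / van Lint--Brouwer, the latter subgraph is the unique strongly regular graph with parameters $(112,30,2,10)$, namely $GQ(3,9)$. Complementing this induced subgraph and invoking Proposition~\ref{gq39subgraph}~(i), I conclude that the second subconstituent of $\overline{McL}$ at $x$ is the unique strongly regular graph with parameters $(112,81,60,54)$, that is, $\overline{GQ}(3,9)$, which is what had to be shown.
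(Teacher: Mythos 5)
Your proposal is correct and follows the same route as the paper, which simply derives the proposition from the cited Goethals--Seidel uniqueness of $McL$ and the known first subconstituent $(112,30,2,10)=GQ(3,9)$; you merely spell out the standard complementation bookkeeping (parameters via (\ref{compleparameter}), eigenvalues $\theta\mapsto -1-\theta$, and the fact that the non-neighbours of $x$ in $\overline{McL}$ are the neighbours of $x$ in $McL$) that the paper leaves implicit. No gaps.
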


Note that the McLaughlin graph can be constructed from the Steiner system $S(4,7,23)$ (cf. \cite[Chapter 11]{drg}). Thus, we have the following construction for the complement of McLaughlin graph.
\begin{pro}\label{constructcommcl}
Let $(\Omega,\mathcal{B})$ be the Steiner system $(4,7,23)$ with $\Omega=\{0,1,\ldots,22\}$. The complement of McLaughlin graph can be constructed as a graph with vertex set $\{1,\ldots,22\}\cup\mathcal{B}_1\cup\mathcal{B}_2$, where $\mathcal{B}_1$ is the set of $77$ blocks in $S(4,7,23)$ which contains $0$ and $\mathcal{B}_2$ is the set of $176$ blocks in $S(4,7,23)$ which does not contain $0$. The edges in this graph are defined as follows:
\begin{enumerate}
\item vertices in $\{1,\ldots,22\}$ are pairwise adjacent;
\item a vertex $x$ in $\{1,\ldots,22\}$ is adjacent to a block $B\in\mathcal{B}_1$ if and only if $x\in B$;
\item a vertex $x$ in $\{1,\ldots,22\}$ is adjacent to a block $B\in\mathcal{B}_2$ if and only if $x\not\in B$;
\item two blocks $B_1,B_2\in\mathcal{B}_i$ are adjacent if and only if $|B_1\cap B_2|=3$, for $i=1,2$;
\item two blocks $B_1\in\mathcal{B}_1$ and $B_2\in\mathcal{B}_2$ are adjacent if and only if $|B_1\cap B_2|=1$.
\end{enumerate}
\end{pro}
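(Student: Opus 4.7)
The plan is to show that the graph $G$ described in the proposition is strongly regular with parameters $(275,162,105,81)$; Proposition~\ref{mcinformation}~(i) then forces $G$ to be isomorphic to $\overline{McL}$.

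The starting point is the combinatorial data of $S(4,7,23)$, which Lemma~\ref{designlem} gives as $\binom{23}{4}/\binom{7}{4}=253$ blocks with replication numbers $\tilde\lambda_1=77$, $\tilde\lambda_2=21$, $\tilde\lambda_3=5$ (and $\tilde\lambda_4=1$). Combined with Proposition~\ref{s23}, which asserts that any two distinct blocks meet in $1$ or $3$ points, a double count of pairs inside a fixed block shows that every block meets exactly $140$ other blocks in $3$ points and $112$ others in $1$ point. Hence $|\mathcal{B}_1|=77$, $|\mathcal{B}_2|=176$, and $|V(G)|=22+77+176=275$.

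Next I would verify that $G$ is $162$-regular, treating the three vertex-types in turn. A point $x$ has as neighbours the $21$ remaining points, the $\tilde\lambda_2=21$ blocks of $\mathcal{B}_1$ through $x$, and (by inclusion--exclusion on $\{0,x\}$) the $253-2\tilde\lambda_1+\tilde\lambda_2=120$ blocks of $\mathcal{B}_2$ avoiding $x$. For $B\in\mathcal{B}_1$, counting triples $\{0,p,q\}\subseteq B$ through $\tilde\lambda_3-1=4$ extra blocks yields $\binom{6}{2}\cdot 4=60$ blocks of $\mathcal{B}_1\setminus\{B\}$ of intersection $3$ with $B$, so $76-60=16$ further blocks of $\mathcal{B}_1$ meet $B$ only at $0$, and hence $112-16=96$ blocks of $\mathcal{B}_2$ have intersection $1$ with $B$; together with the $6$ points of $B\setminus\{0\}$ this sums to $162$. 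For $B\in\mathcal{B}_2$, $\tilde\lambda_4=1$ forces each $3$-subset of $B$ together with $0$ to determine a unique block of $\mathcal{B}_1$ of intersection $3$ with $B$, giving $35$ such blocks, $77-35=42$ blocks of $\mathcal{B}_1$ of intersection $1$, and $140-35=105$ blocks of $\mathcal{B}_2$ of intersection $3$; adding the $15$ points outside $B$ gives $162$ as well.

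The remaining task is to verify $\lambda=105$ and $\mu=81$ by casework on the unordered pairs of vertex-types together with the adjacency sub-case. Common neighbours of two points follow from $\tilde\lambda_2$ and $\tilde\lambda_3$ by inclusion--exclusion on $\{0,x,y\}$, and common neighbours of a point with a block use the same parameters together with the intersection-$1$-or-$3$ property. The main obstacle is the block--block bookkeeping: one must subdivide according to the pair of classes $\mathcal{B}_i,\mathcal{B}_j$ to which the two blocks belong, the size ($1$ or $3$) of their intersection, and whether $0$ lies in that intersection, and in each sub-case count blocks through a prescribed triple (via $\tilde\lambda_3$) or through a prescribed quadruple (via $\tilde\lambda_4=1$), taking care not to double-count. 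Once every sub-case yields $105$ for adjacent pairs and $81$ for non-adjacent pairs, Proposition~\ref{mcinformation}~(i) closes the argument.
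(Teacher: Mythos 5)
The paper does not actually prove Proposition~\ref{constructcommcl}: it is imported as a known construction with a pointer to \cite[Chapter 11]{drg}, so your proposal supplies a verification the paper omits. Your strategy --- check that the described graph is strongly regular with parameters $(275,162,105,81)$ and then invoke the uniqueness statement in Proposition~\ref{mcinformation}~(i) --- is the standard and correct route, and everything you have written out is right: $253$ blocks, replication numbers $77,21,5,1$ from Lemma~\ref{designlem}, the split of the $252$ other blocks into $140$ meeting a fixed block in $3$ points and $112$ in $1$ point, and the three valency counts $21+21+120$, $6+60+96$, $15+42+105$, each equal to $162$. One point left implicit that deserves a sentence is that in counts such as the $35$ blocks of $\mathcal{B}_1$ meeting a given $B\in\mathcal{B}_2$ in $3$ points, the blocks produced by distinct $3$-subsets (or distinct pairs) are automatically distinct because two blocks of $S(4,7,23)$ never share $4$ or more points (Proposition~\ref{s23}); this is exactly the observation that prevents double-counting throughout the $\lambda$/$\mu$ casework you defer. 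That casework does go through with the tools you name: for instance, two points have $20+\tilde{\lambda}_3+(253-3\cdot77+3\cdot21-5)=20+5+80=105$ common neighbours, a point $x$ and an adjacent block $B\in\mathcal{B}_1$ have $5+20+80=105$, and a point $x$ and a non-adjacent block $B\in\mathcal{B}_1$ have $6+15+60=81$. So the outline is sound and correct where executed; to stand as a complete proof it still needs the remaining vertex-type pairs (in particular the block--block cases) written out, but no new idea is required to do so.
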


In this section, we will show that
\begin{thm}\label{mc}
The strongly regular graph $\overline{McL}$, which is the complement of the McLaughlin graph, with parameters $(275,162,105,81)$ and smallest eigenvalue $-3$ is $4$-integrable.
\end{thm}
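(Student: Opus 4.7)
The goal is to exhibit an integer matrix $N$ with $4(A(\overline{McL})+3I) = N^T N$. A preliminary rank count is useful: since $A(McL)$ has spectrum $\{112^{(1)}, 2^{(252)}, (-28)^{(22)}\}$, the matrix $4(A(\overline{McL})+3I)=8I+4J-4A(McL)$ has spectrum $\{660^{(1)}, 120^{(22)}, 0^{(252)}\}$, hence rank $23$. The coincidence with the $23$ points of the Steiner system $S(4,7,23)$ strongly suggests building $N$ as a $23\times 275$ integer matrix whose rows are indexed by those points.

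The strategy is to exploit the explicit realization of $\overline{McL}$ from $S(4,7,23)$ given in Proposition \ref{constructcommcl}, together with the quasi-symmetric property (Proposition \ref{s23}) that any two distinct blocks of $S(4,7,23)$ meet in either $1$ or $3$ points. I would index the $275$ columns of $N$ by $\{1,\ldots,22\}\cup\mathcal{B}_1\cup\mathcal{B}_2$. For each block $B\in\mathcal{B}_1\cup\mathcal{B}_2$, take $N_B$ to be a signed integer vector supported on coordinates determined by $B$, with $\|N_B\|^2=12$; crucially, the sign convention for $\mathcal{B}_1$-blocks is opposite to that for $\mathcal{B}_2$-blocks, reflecting the flipped point-block adjacency rule (adjacent iff containing for $\mathcal{B}_1$, adjacent iff not containing for $\mathcal{B}_2$). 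For each point vertex $x\in\{1,\ldots,22\}$, take $N_x$ to be a specifically chosen norm-$12$ integer vector so that every pair of distinct point columns has inner product $4$.

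The verification splits into the five types of vertex pairs: point--point, point to $\mathcal{B}_1$, point to $\mathcal{B}_2$, block--block within one class, and $\mathcal{B}_1$ to $\mathcal{B}_2$. In each case the inner product $(N_u, N_v)$ reduces to a linear expression in intersection sizes or membership indicators, which take only the values allowed by the Steiner system ($|B\cap B'|\in\{1,3\}$ for distinct blocks and $|B\cap\{x\}|\in\{0,1\}$ for a point). The design parameters (replication number $77$, pairwise block-incidence number $\tilde{\lambda}_2=21$) control the constant terms, and the scalars must be tuned so that adjacent pairs give $4$ and non-adjacent distinct pairs give $0$. As an immediate bonus, by Proposition \ref{basicpro} (iv) the $4$-integrability of the induced second subconstituent $\overline{GQ}(3,9)$ then follows at no extra cost, which is the remaining half of part (iv) of Theorem \ref{mainthm}.

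The main obstacle is finding a single system of integer entries and signs which makes all five inner-product equations hold simultaneously. The asymmetry between $\mathcal{B}_1$ and $\mathcal{B}_2$ (blocks of the two classes have $6$ versus $7$ entries in $\{1,\ldots,22\}$, depending on whether they contain the distinguished point $0$) forces the row indexed by $0$ to play a distinguished role, and matching the point-to-block inner products across both block-classes with compatible signs is the step most likely to break for a naive ansatz. Working inside the tightly constrained $23$-dimensional ambient space is, however, an asset rather than a liability: together with the rigidity of $S(4,7,23)$ it should pin down the construction essentially uniquely up to obvious symmetries, and one then merely computes the five inner products to conclude.
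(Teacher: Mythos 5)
Your proposal identifies the right combinatorial substrate (the model of $\overline{McL}$ on $S(4,7,23)$ from Proposition \ref{constructcommcl} and the quasi-symmetry of Proposition \ref{s23}), but it stops exactly where the theorem begins: the existence of a consistent assignment of integer columns \emph{is} the content of the statement, and you neither exhibit one nor prove one exists. Moreover your ansatz is over-constrained in two ways. First, insisting on a $23\times 275$ matrix asks for $2\Lambda(\overline{McL})$ to embed in $\mathbb{Z}^{23}$, i.e.\ for a $4$-integral embedding in the minimal possible number of coordinates; $4$-integrability only requires an embedding in $\mathbb{Z}^{N}$ for \emph{some} $N$, and nothing in your rank count shows that $N=23$ is achievable. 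Second, a column ``supported on the coordinates determined by $B$'' has at most $7$ nonzero entries, so it cannot have norm $12$ with entries in $\{0,\pm1\}$; once larger entries or support outside $B$ are forced, the inner products no longer reduce to a single linear function of $|B\cap B'|$, and that function would in any case have to send $3\mapsto 4$, $1\mapsto 0$ on $\mathcal{B}_i$--$\mathcal{B}_i$ pairs but $1\mapsto 4$, $3\mapsto 0$ on $\mathcal{B}_1$--$\mathcal{B}_2$ pairs --- opposite slopes, which is precisely the obstruction you flag and do not resolve.

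The paper takes a different and easier route: it realizes the $275$ columns as explicit vectors of the Leech lattice $\Lambda_{24}$ (types $\mathbf{a}_i$, $\mathbf{b}_B$, $\mathbf{c}_B$ built from $S(4,7,23)$, with the coordinate $\infty$ and the point $0$ playing exactly the distinguished roles you anticipated), projects them orthogonally to a fixed minimal vector $\mathbf{a}_0$ so as to land in the shorter Leech lattice $\Lambda_{23}$, checks $N^TN=A(\overline{McL})+3I$ there, and then quotes the Conway--Sloane theorem that the unimodular lattice $\Lambda_{23}$ is $4$-integrable. The coordinates of those vectors are multiples of $1/\sqrt{8}$, not integers, so no direct integral matrix with $23$ rows is produced or needed. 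To salvage your approach you would have to either import that Conway--Sloane embedding or actually produce the integer matrix (in however many rows it takes) and verify the five inner-product cases; as written, your argument is a program rather than a proof. (Your closing remark that Proposition \ref{basicpro}(iv) then yields the $4$-integrability of $\overline{GQ}(3,9)$ is correct and matches how the paper obtains that part of Theorem \ref{mainthm}.)
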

\begin{proof}In order to do so, we will show that the lattice $\Lambda(\overline{McL})$ is a sublattice of the shorter Leech lattice $\Lambda_{23}$.

First we define some vectors in $\mathbb{R}^{24}$. For convenience, the coordinates of the vectors in $\mathbb{R}^{24}$ are indexed by the elements of the set $\{\infty,0,1,\ldots,22\}$.
Moreover, the Steiner systems $S(5,8,24)$ and $S(4,7,23)$ have $\{\infty,0,1,\ldots,22\}$ and $\{0,1,\ldots,22\}$ as their point sets, respectively. By following \cite{Deza}, we find that the Leech lattice $\Lambda_{24}$ is generated by the following $760$ ($=1+759$) vectors:
\begin{itemize}
\item $\frac{1}{\sqrt{8}}(-3,1^{23})$,
\item $\frac{1}{\sqrt{8}}(2^8,0^{16})$, in which the positions of the eight $2$'s form a block of $S(5,8,24)$.
\end{itemize}
Note that the vector $\mathbf{a}_0:=\frac{1}{\sqrt{8}}(4,4,0,\ldots,0)$ is a minimal vector in $\Lambda_{24}$. We have, from \cite[p. 179]{JHC},  that
$$\Lambda_{23}=\left\{\mathbf{v}-\frac{(\mathbf{v},\mathbf{a}_0)}{(\mathbf{a}_0,\mathbf{a}_0)}\mathbf{a}_0\mid \mathbf{v}\in \Lambda_{24},(\mathbf{v},\mathbf{a}_0)\text{ is even.}\right\}$$
Let $\Delta$ be the set of the following $275$ vectors:
\begin{enumerate}
\item $\mathbf{a}_i:=\frac{1}{\sqrt{8}}(4,0,\ldots,0,4,0,\ldots,0)^T$, where the first $4$ is in the first position $(\infty)$ and second $4$ is in the $i$-th position, for $1\leq i\leq 22$.
\item $\mathbf{b}_{B}:=\frac{1}{\sqrt{8}}(2,2^7,0^{16})^T$, where the first $2$ is in the position $(\infty)$ and the positions of the seven other $2$'s form a block $B$ of $S(4,7,23)$ which contains $0$.
\item $\mathbf{c}_{B}:=\frac{1}{\sqrt{8}}(3,-1^7,1^{16})^T$, where the $3$ is in the position $(\infty)$ and the positions of the seven other $-1$'s form a block $B$ of $S(4,7,23)$ which does not contain $0$.
\end{enumerate}
Note that $\Delta$ is a subset of $\Lambda_{24}$ (see \cite{Deza}), and for any vector $\mathbf{u}$ in $\Delta$, $(\mathbf{u},\mathbf{a}_0)=2$. Hence, the lattice generated by the following set
$$\widetilde{\Delta}:=\{\mathbf{u}-\frac{1}{2}\mathbf{a}_0\mid \mathbf{u}\in\Delta\}$$ is a sublattice of $\Lambda_{23}$. Moreover, by Proposition \ref{s23} and Proposition \ref{constructcommcl}, we find that the matrix $N$ with the $275$ vectors in $\widetilde{\Delta}$ columns satisfies $N^TN=A(\overline{McL})+3I$. This shows that the lattice $\Lambda(\overline{McL})$ is a sublattice of $\Lambda_{23}$.

Since that the shorter Leech lattice $\Lambda_{23}$ is an unimodular lattice with dimension $23$, it is $4$ integrable (see \cite[Theorem 18]{CS2}). This completes the proof.
\end{proof}

\section*{Acknowledgments}
We would like to thank Akihiro Munemasa for pointing out that one can extend the complement of the McLaughlin graph to obtain a slightly better lower bound.

\section*{Address:}
\textbf{a)}   Wen-Tsun Wu Key Laboratory of CAS, School of Mathematical Sciences, University
of Science and Technology of China, Hefei, Anhui, 230026, P.R. China \\

\noindent \textbf{b)} School of Mathematical Sciences, University of Science and Technology of China,
Hefei, Anhui, 230026, P.R. China\\

\noindent \textbf{c)} School of Mathematical Sciences, University of Science and Technology of China,
Hefei, Anhui, 230026, P.R. China\\
\section*{Email Address:}
koolen@ustc.edu.cn (J. H. Koolen)
\newline
masoodqau27@gmail.com; masood@mail.ustc.edu.cn (M. U. Rehman)
\newline
xuanxue@mail.ustc.edu.cn (Q. Yang)
\end{document}